\newtheorem{thm}{Theorem}[section]
\newtheorem{lem}[thm]{Lemma}
\def\QED{\hfill \rule{7pt}{7pt}}
\newcommand{\dR}{{\mathcal{R}}}
\def\ex{\mathrm{ex}}
\newcommand{\rmnum}[1]{\romannumeral #1}
\newcommand{\Rmnum}[1]{\expandafter\@slowromancap\romannumeral #1@}
\begin{document}
	\title{The minimum number of clique-saturating edges}
	
	\author{Jialin He$^1$\and
		Fuhong Ma$^1$\and
		Jie Ma$^1$\and
		Xinyang Ye$^2$}	
	\date{}
	
	\maketitle
	
	\footnotetext[1]{School of Mathematical Sciences, University of Science and Technology of China, Hefei, Anhui 230026, China. Partially supported by the National Key R and D Program of China 2020YFA0713100, National Natural Science Foundation of China grant 12125106, and Anhui Initiative in Quantum Information Technologies grant AHY150200.}

	\footnotetext[2]{School of Mathematical Sciences, Peking University, Beijing 100871, China.}

	\begin{abstract}
		Let $G$ be a $K_p$-free graph.
		We say $e$ is a	$K_p$-saturating edge of $G$ if $e\notin E(G)$ and $G+e$ contains a copy of $K_p$.
		Denote by $f_p(n, e)$ the minimum number of $K_p$-saturating edges that an $n$-vertex $K_p$-free graph with $e$ edges can have.
		Erd\H{o}s and Tuza conjectured that
		$f_4(n,\lfloor n^2/4\rfloor+1)=\left(1 + o(1)\right)\frac{n^2}{16}.$
		Balogh and Liu disproved this by showing
		$f_4(n,\lfloor n^2/4\rfloor+1)=(1+o(1))\frac{2n^2}{33}$.
		They believed that a natural generalization of their construction for $K_p$-free graph should also be optimal and made a conjecture that
		$f_{p+1}(n,\ex(n,K_p)+1)=\left(\frac{2(p-2)^2}{p(4p^2-11p+8)}+o(1)\right)n^2$ for all integers $p\ge 3$.
		The main result of this paper is to confirm the above conjecture of Balogh and Liu.

	\end{abstract}

\section{Introduction}
Given a graph $H$, we say a graph $G$ is {\it $H$-free} if $G$ does not contain $H$ as a subgraph.
Let the {\it Tur\'an number} $\ex(n,H)$ of $H$ denote the maximum number of edges in an $n$-vertex $H$-free graph.
The study of Tur\'an numbers can date back to the work of Mantel \cite{M07} and is the central subject in extremal graph theory (see \cite{FS-survey} for a recent survey).
The classical theorem of Tur\'an \cite{T41} states that for any integer $p\ge 2$, the unique $n$-vertex $K_{p+1}$-free graph attaining the maximum number $\ex(n,K_{p+1})$ of edges is the $p$-partite {\it Tur\'an graph} $T_p(n)$, i.e., the $n$-vertex complete balanced graph.


For $p\geq 3$, let $G$ be a $K_{p}$-free graph and $e$ be a {\it non-edge} of $G$ (i.e., an edge in the complement of $G$).
We say $e$ is a {\it $K_{p}$-saturating edge} of $G$, if $G+e$ contains a copy of $K_p$.
This notion is closely related to Tur\'an numbers.
Indeed, a $K_p$-free graph $G$ is maximal if and only if
every non-edge of $G$ is a $K_{p}$-saturating edge (let us call this property $\star$).
So in other words, Tur\'an's Theorem determines the maximum number of edges $e(G)$ over all $K_p$-free graphs $G$ satisfying the property $\star$.
On the other hand, Zykov \cite{Z49} and independently Erd\H os, Hajnal and Moon \cite{EHM} determined the minimum number $e(G)$ over all $n$-vertex $K_p$-free graphs $G$ satisfying the property $\star$,
which is uniquely attained by the $n$-vertex complement graph of a clique of size $n-p+2$.
For more references on this minimization problem, we refer interested readers to the recent surveys \cite{CFFS,G19} and to \cite{BBM,BBMO} for related problems in the language of graph bootstrap percolation.

In this paper, we consider another type of extremal problems on the clique-saturating edges.
For a $K_p$-free graph $G$,
let $f_p(G)$ denote the number of $K_p$-saturating edges of $G$.
Let $f_p(n, e)$ be the minimum number of $K_p$-saturating edges of an $n$-vertex $K_p$-free graph with $e$ edges.
For all integers $p\geq 3$, the example of the Tur\'an graph $T_{p-1}(n)$ shows that
$$f_{p+1}(n,e)=0 \mbox{ ~ for all ~} 0\le e\le \ex(n, K_p).$$
Erd\H{o}s and Tuza (see \cite{E90}) proved that $f_4(n,\lfloor \frac{n^2}4 \rfloor+1)\ge cn^2$ for some constant $c>0$; that is, for the case $p=3$, if adding one more edge to the above extreme, then the function will suddenly jump from $0$ to $\Omega(n^2)$.
Erd\H{o}s and Tuza also made a conjecture that
$f_4\left(n,\left\lfloor \frac{n^2}4\right\rfloor+1\right)=\left(1 + o(1)\right)\frac{n^2}{16}.$
This however was disproved by Balogh and Liu in \cite{BL},
where they constructed an $n$-vertex $K_4$-free graph with $\lfloor\frac{n^2}4\rfloor+1$ edges and with only $(1+o(1))\frac{2n^2}{33}$ $K_4$-saturating edges (see Figure~1 in the case $p=3$ for the construction).
Furthermore, Balogh and Liu \cite{BL} showed that this construction is best possible.

\begin{thm}[Balogh-Liu \cite{BL}]\label{Thm:B-L}
	$f_4(n,\lfloor \frac{n^2}4 \rfloor+1)=\left(1 + o(1)\right)\frac{2n^2}{33}.$
\end{thm}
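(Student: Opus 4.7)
We aim to prove both directions of Theorem~\ref{Thm:B-L}: the upper bound (by construction) and the matching lower bound.

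\emph{Upper bound.} I would exhibit an $n$-vertex $K_4$-free graph $G^\star$ with $\lfloor n^2/4 \rfloor + 1$ edges and only $(1+o(1))\tfrac{2n^2}{33}$ $K_4$-saturating edges. The natural template is an almost-bipartite bulk $A \cup B$ with most pairs between $A$ and $B$ joined, plus a small triangle-generating gadget on a carefully sized subset (this should match Figure~1 of the paper in the case $p=3$). Since $\ex(n,K_3) = \lfloor n^2/4\rfloor$, any $K_4$-free graph with $\lfloor n^2/4\rfloor+1$ edges must contain a triangle; the construction is designed so that the only non-edges $uv$ whose common neighborhood $N(u) \cap N(v)$ contains an edge lie inside one prescribed zone, and these can be counted directly. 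Choosing the size proportions to meet the $\lfloor n^2/4\rfloor+1$ edge constraint while minimizing the saturating-edge count reduces, after a routine optimization, to the constant $\tfrac{2}{33}$.

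\emph{Lower bound.} Let $G$ be any $K_4$-free graph on $n$ vertices with $\lfloor n^2/4\rfloor+1$ edges. The key local observation is that for every edge $xy \in E(G)$, the common neighborhood $N(x) \cap N(y)$ induces a triangle-free subgraph (otherwise $G$ would contain a $K_4$), and any non-edge $uv$ inside $N(x) \cap N(y)$ is itself $K_4$-saturating, because $xy$ then plays the role of an edge inside $N(u) \cap N(v)$. By Mantel's theorem applied to $G[N(x) \cap N(y)]$, there are at least $\tfrac14 |N(x) \cap N(y)|^2 - O(|N(x)\cap N(y)|)$ non-edges there. Summing this across a carefully chosen collection of edges $xy$, with suitable weights to avoid double counting the same saturating pair through different witnesses, yields a lower bound on $f_4(G)$.

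To push this to the tight constant $\tfrac{2}{33}$, I would invoke Erd\H{o}s--Simonovits stability for $K_3$: either (i) $G$ differs from the Tur\'an graph $T_2(n)$ by $o(n^2)$ edges, or (ii) $G$ is bounded away from being bipartite. Case (ii) is easier: supersaturation gives $\Omega(n^3)$ triangles, and the counting above yields $\Omega(n^2)$ saturating edges with plenty of slack in the constant. Case (i) is the technical heart: one refines stability to locate the triangles on a small ``non-bipartite core'' $U$ whose size is controlled by the single edge surplus over the Mantel threshold, and shows that the common neighborhoods of the edges of any triangle inside $U$ must have sizes close to specific values dictated by the near-bipartition. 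The main obstacle is matching the exact constant $\tfrac{2}{33}$ in case (i); this reduces to an LP-type optimization over the possible sizes of the triangle-edge common neighborhoods and their mutual intersections. The extremal configuration of this LP coincides with the construction, and extracting it via the equality conditions in Cauchy--Schwarz or Jensen is the delicate step that distinguishes the true answer $\tfrac{2}{33}$ from Erd\H{o}s and Tuza's older guess of $\tfrac{1}{16}$.
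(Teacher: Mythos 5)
This is a cited result (Balogh--Liu), and the paper reproves it only as the $p=3$ case of Theorems~\ref{Thm:main} and~\ref{Thm:thorough version}; that proof runs through a maximum family $\dR$ of vertex-disjoint triangles (chosen to also maximize $e(G\backslash V(\dR))$), a two-type count of saturating edges ($\ell_1^{\dR}$ incident to $V(\dR)$, $\ell_2^{\dR}$ inside the remainder, the latter via Jensen on the sets $A_i(R^*)$), and a vertex-switching argument -- no stability or regularity machinery. Your upper-bound sketch is compatible with the paper's construction (the $p=3$ blow-up in Figure~1), but your lower bound takes a genuinely different route, and as written it has two concrete gaps.

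First, your case (ii) fails quantitatively. Supersaturation requires an edge surplus of $\varepsilon n^2$ over $\ex(n,K_3)$; here the surplus is exactly $1$, so supersaturation gives nothing directly. The implication ``far from bipartite $\Rightarrow$ $\Omega(n^3)$ triangles'' can only be recovered by combining the triangle removal lemma with stability, and the resulting triangle count $\delta n^3$ has $\delta$ astronomically small. Averaging then gives an edge $xy$ lying in only $\delta n$ triangles, hence about $\delta^2 n^2/4$ saturating pairs inside $N(x)\cap N(y)$ -- vastly below $\frac{2}{33}n^2$. So there is no ``plenty of slack in the constant''; to beat $\frac{2}{33}$ by this route you would need some edge in roughly $n/2$ triangles, which being far from bipartite does not provide. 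Second, case (i), which you correctly identify as the heart of the matter, is not actually argued: the ``LP-type optimization over the possible sizes of the triangle-edge common neighborhoods and their mutual intersections'' is a description of a hoped-for endgame, and the double-counting of a single saturating pair $uv$ through many witness edges $xy$ is acknowledged but never resolved -- that is precisely where the difficulty of the problem lives. Compare with how the paper sidesteps both issues: disjointness of the $A_i(R)$'s (forced by $K_{p+1}$-freeness) makes the count inside $H_\dR$ automatically injective, and the switching lemma (Lemma~\ref{Lem:Key lemma for p-joint-book}) replaces the stability dichotomy entirely. As it stands the proposal is a plan with the decisive steps missing, not a proof.
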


In fact, they proved a stronger statement that $f_4(n,\lfloor \frac{n^2}4 \rfloor+t)=\frac{2}{33}n^2+\Theta(n)$ for every $1\le t\le \frac{n}{66}.$
Balogh and Liu~\cite{BL} commented that a similar phenomenon like Theorem~\ref{Thm:B-L} should also hold for general $p$ and thus made an explicit conjecture (see Remark~(iii) in \cite{BL}) suggested by a natural generalization of their $K_4$-free construction that for all integers $p\ge 3,$
$$f_{p+1}\big(n,\ex(n, K_p)+1\big)=\left(\frac{2(p-2)^2}{p(4p^2-11p+8)}+o(1)\right)n^2.$$

The main result of the present paper is to prove the above conjecture of Balogh and Liu~\cite{BL}.

\begin{thm}\label{Thm:main}
	For all integers $p\ge 3,$
	$f_{p+1}\big(n,\ex(n, K_p)+1\big)=\left(\frac{2(p-2)^2}{p(4p^2-11p+8)}+o(1)\right)n^2.$
\end{thm}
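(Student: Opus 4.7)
Theorem~\ref{Thm:main} has two parts: an explicit construction (upper bound) and a matching lower bound. The upper bound reproduces the natural $p$-generalization of the Balogh--Liu construction sketched in Remark~(iii) of~\cite{BL}. One partitions $[n]$ into parts $V_1,\ldots,V_{p-1}$ of tuned sizes, takes most cross-pairs to be complete bipartite, inserts a small clique $Q$ of size $q$ inside $V_1$, and deletes a carefully chosen set of cross edges so that the resulting graph is $K_{p+1}$-free and has exactly $\ex(n,K_p)+1$ edges. A direct count of $K_{p+1}$-saturating non-edges, followed by a one-parameter optimization in $q$ after using the edge constraint to fix the part sizes, yields the constant $\tfrac{2(p-2)^2}{p(4p^2-11p+8)}$. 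This direction is a routine verification and calculus exercise.

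For the lower bound, let $G$ be any $n$-vertex $K_{p+1}$-free graph with $\ex(n,K_p)+1$ edges. The starting point is a saturating-edge identity: for every $(p-1)$-clique $Q\subset V(G)$ the common neighborhood $N(Q)$ is independent in $G$ (otherwise $Q\cup N(Q)$ contains $K_{p+1}$), so every non-edge with both endpoints in $N(Q)$ is $K_{p+1}$-saturating. Double counting then yields
$$f_{p+1}(G)\ \ge\ \frac{1}{M(G)}\,\sum_{Q}\binom{|N(Q)|}{2},$$
where the sum runs over $(p-1)$-cliques $Q$ in $G$ and $M(G):=\max_{uv\notin E(G)}|\{K_{p-1}\subset N(u)\cap N(v)\}|$. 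Since $e(G)>\ex(n,K_p)$, $G$ contains at least one copy of $K_p$, and a stability-plus-cleaning analysis tailored to the near-$\ex(n,K_p)$ regime shows that $G$ is $o(n^2)$-close to some $(p-1)$-partite graph with parts $U_1,\ldots,U_{p-1}$ of size $(1+o(1))\tfrac{n}{p-1}$; otherwise one already finds enough $K_{p-1}$-cliques with large common neighborhoods to conclude directly via the identity.

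The core step is then a structural assertion: up to an $o(n)$ exceptional vertex set, the internal edges of $G$ form a single clique $Q^\ast$ inside one part $U_1$, accompanied by a controlled family of missing cross edges between $U_1$ and one other part. Internal edges placed in several parts, or triangles straddling different parts, extend to many $K_{p-1}$-copies whose common neighborhoods are large independent sets in yet other parts, and the identity above then forces strictly more saturating edges than our target. With the structure pinned, the identity reduces to an explicit variational problem in $|Q^\ast|$ and the part sizes subject to the edge constraint; its minimum exactly matches the construction's value, giving the matching lower bound.

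\textbf{Main obstacle.} The heart of the difficulty is the structural step. The straightforward generalization of the $p=4$ case analysis in~\cite{BL} proliferates with $p$, since internal edges, cross defects, and their interactions admit many more configurations. The key new input is likely a symmetrization or weight-shifting argument showing that any non-canonical near-extremal configuration can be transformed into the canonical single-clique-in-one-part configuration without decreasing the number of saturating edges, thereby reducing the entire analysis to the same one-parameter optimization performed in the construction.
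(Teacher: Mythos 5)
Your lower-bound strategy is genuinely different from the paper's, but it contains a fatal structural gap. The claimed dichotomy --- either $G$ is $o(n^2)$-close to $(p-1)$-partite or one wins directly --- is false, and so is the subsequent assertion that the internal edges reduce to a single clique $Q^\ast$ in one part. First, standard stability does not apply here: $e(G)\approx\frac{p-2}{2(p-1)}n^2$ is bounded away from $\ex(n,K_{p+1})=\big(\frac{p-1}{2p}+o(1)\big)n^2$, so $K_{p+1}$-freeness plus this edge count gives no proximity to $(p-1)$-partiteness. Second, and decisively, the optimal construction itself (the blow-up in Figure~1: an apex independent set $V_0$ joined to one half $V_i$ of each part of a complete $(p-1)$-partite graph) is $\Theta(n^2)$-far from $(p-1)$-partite, and under any $(p-1)$-partition its ``internal'' edges include the complete bipartite graph between $V_0$ and some $V_i$, with $|V_0|,|V_i|=\Theta(n)$ --- not a clique. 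The single-clique-in-one-part picture is precisely the Erd\H{o}s--Tuza configuration; for $p=3$ the variational problem over such configurations has minimum $(1+o(1))\frac{n^2}{16}$, which strictly exceeds the true value $(1+o(1))\frac{2n^2}{33}$. Hence your proposed reduction, if it could be carried out, would yield a ``lower bound'' contradicting the upper bound, so the symmetrization/weight-shifting step you flag as the main obstacle cannot exist in the form you describe. (Your upper-bound sketch has the same misconception: the extremal graph is not ``$(p-1)$ parts plus a small clique $Q$ in $V_1$'' but the apex blow-up above; inserting a clique and deleting cross edges gives the suboptimal Erd\H{o}s--Tuza-type constant.)

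For contrast, the paper avoids any global structure theorem. It fixes a maximum family $\mathcal{R}$ of vertex-disjoint $K_p$'s (chosen to also maximize $e(G\setminus V(\mathcal{R}))$), splits the saturating edges into those meeting $V(\mathcal{R})$ and those inside $H_{\mathcal{R}}=G\setminus V(\mathcal{R})$, and bounds each piece separately: an averaging argument produces a clique $R^\ast$ with many edges to $H_{\mathcal{R}}$, whose $(p-1)$-subsets have large disjoint independent common neighborhoods $A_i(R^\ast)$ in $H_{\mathcal{R}}$; Jensen's inequality on the $A_i$'s handles the case where some $A_i$ is empty, and a vertex-switching argument (Lemma~4.1) derives a contradiction with the maximality of $\mathcal{R}$ otherwise. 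Your double-counting identity over $(p-1)$-cliques is correct as stated but is not, by itself, strong enough (the multiplicity $M(G)$ is uncontrolled), and the structural input you would need to make it work is unavailable.
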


Most of this paper will be devoted to the lower bound of the following theorem, which implies the lower bound of Theorem~\ref{Thm:main}. Note that for any integer $p\geq 3$, $f_{p+1}(G)=0$ holds for $G=T_{p-1}(n)$.

\begin{thm}\label{Thm:thorough version}
	Let $p\ge 3$ and $n\ge 8 p^5$ be integers.
	Let $\mathcal{G}$ be the family consisting of all $n$-vertex $K_{p+1}$-free graphs with exactly $\ex(n,K_p)$ edges.
	Then
	$$\min_{G\in \mathcal{G}\backslash\{T_{p-1}(n)\}} f_{p+1}(G)=\frac{2(p-2)^2}{p(4p^2-11p+8)}n^2-\frac{(p-2)(2p-3)}{4p^2-11p+8}n+O_p(1).$$
	In addition, if $n$ is divisible by $p(p-1)(4p^2-11p+8),$ then
	$$\min_{G\in \mathcal{G}\backslash\{T_{p-1}(n)\}} f_{p+1}(G)=\frac{2(p-2)^2}{p(4p^2-11p+8)}n^2-\frac{(p-2)(2p-3)}{4p^2-11p+8}n.$$
\end{thm}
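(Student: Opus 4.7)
The theorem asserts a matching lower and upper bound, and I treat the two directions separately; the lower bound is the more substantial half. For the upper bound, I would generalize the Balogh--Liu $K_4$-free example to arbitrary $p$. Take parts $V_1, \dots, V_{p-1}$ with sizes close to $n/(p-1)$ (possibly slightly unbalanced), install a specific bipartite-like subgraph inside $V_1$ whose edges combine with transversals through $V_2, \dots, V_{p-1}$ to create $K_p$'s, and simultaneously remove cross-edges between the inner-edge vertices and carefully chosen subsets of $V_2, \dots, V_{p-1}$. The part sizes and the sizes of the special subsets are determined by a quadratic optimization minimizing the count of saturating non-edges subject to the edge-count constraint $e(G) = \ex(n, K_p)$; this optimization yields exactly the $n^2$- and $n$-coefficients advertised. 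One verifies that $G$ is $K_{p+1}$-free (each $K_p$ uses one inner-edge endpoint plus a transversal, and no such $K_p$ can be extended) and that the only saturating non-edges reduce to a small well-understood class, which is counted directly.

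\textbf{Lower bound.} Let $G \in \mathcal{G} \setminus \{T_{p-1}(n)\}$. Since $T_{p-1}(n)$ is the unique $n$-vertex $K_p$-free graph with $\ex(n, K_p)$ edges, $G$ must contain a $K_p$. The proof proceeds in two stages. Stage one is a stability lemma: $V(G)$ admits a partition $V_1 \cup \cdots \cup V_{p-1}$ with $\big||V_i| - n/(p-1)\big| = O_p(1)$ and with controlled total inner-edge count. Since the Erd\H{o}s--Simonovits stability does not apply at the $\ex(n,K_p)$ density (which lies strictly below $\ex(n,K_{p+1})$), this requires a tailored argument, which I prove contrapositively: a graph far from any near-Tur\'an partition must contain many $K_p$'s whose common neighborhoods witness many saturating non-edges. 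Stage two is an enumeration: a non-edge $uv$ is $K_{p+1}$-saturating iff $N(u) \cap N(v)$ contains a copy of $K_{p-1}$, and since $V \setminus V_i$ is $(p-2)$-partite in the near-Tur\'an structure, such a $K_{p-1}$ for an inner non-edge $uv \subset V_i$ requires either a shared inner neighbor in $V_i$ together with a transversal $K_{p-2}$ through the remaining parts, or an inner edge in some other part with endpoints cross-adjacent to both $u$ and $v$. Bounding these two types of witnesses by a convex function of the inner-edge degree sequence and applying Jensen's inequality subject to the edge-count constraint linking inner and missing-cross edges, I show the minimum is attained precisely at the configuration of the construction.

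\textbf{Main obstacle.} The sharpest part of the argument is the stability lemma combined with the tightness of the final optimization, particularly in matching the linear and constant coefficients. For stability, I plan a local-swap argument in the spirit of Zykov symmetrization: whenever $G$ has a vertex whose neighborhood deviates significantly from the Tur\'an pattern, swapping its neighborhood with a typical vertex either produces many additional $K_p$'s (hence many saturating non-edges, contradicting minimality of $f_{p+1}(G)$) or drops $e(G)$ below $\ex(n, K_p)$, contradicting the hypothesis. For the optimization, the challenge is eliminating all slack to match the exact linear term $-\frac{(p-2)(2p-3)}{4p^2-11p+8}\, n$; I plan to do this by a second-order analysis comparing any candidate inner-edge configuration against the extremal bipartite one, with boundary effects absorbed into the $O_p(1)$ error (which vanishes under the divisibility hypothesis $p(p-1)(4p^2-11p+8) \mid n$).
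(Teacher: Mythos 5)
Your upper-bound sketch is compatible with the correct extremal construction (the paper's graph — a blow-up of $K_{2,\dots,2}$ plus an apex-like independent set $V_0$ joined to one designated subset of each part — can indeed be re-described as a complete $(p-1)$-partite graph with a complete bipartite graph installed inside one part and cross-edges deleted between one side of it and chosen subsets of the other parts), so that half is plausible modulo the unverified optimization. The lower bound, however, rests on a stability lemma that is false as stated, and false even in any weakening strong enough to support your stage-two enumeration. The minimizer itself is $\Theta_p(n^2)$-far from $(p-1)$-partite: it contains a complete $p$-partite subgraph on parts $V_0,V_1,\dots,V_{p-1}$, each of size $\Theta_p(n)$, so \emph{every} partition of $V(G)$ into $p-1$ classes forces $\Omega_p(n^2)$ inner edges; and in the natural partition the class absorbing $V_0$ exceeds $n/(p-1)$ by $\Theta_p(n)$, while the others fall short by $\Theta_p(n)$ — not $O_p(1)$. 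So the conclusion you want from stage one ("$\bigl||V_i|-n/(p-1)\bigr|=O_p(1)$ with controlled inner-edge count") is violated by the very graph attaining the minimum, and your contrapositive dichotomy cannot close: the extremal graph sits in the "far from near-Tur\'an" case, so the entire sharp optimization, including the exact leading coefficient $\frac{2(p-2)^2}{p(4p^2-11p+8)}$, would still have to be carried out there, which is the whole problem. The Zykov-style swap you propose also does not obviously preserve the exact edge count $\ex(n,K_p)$ or vary $f_{p+1}$ monotonically, so the claimed contradiction in either branch is not established.

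To repair this route you would need stability toward blow-ups of the correct $(2p-1)$-vertex pattern graph (which is $p$-chromatic), a statement essentially as hard as the theorem. The paper avoids global structure altogether: it fixes a maximum family $\dR$ of vertex-disjoint $K_p$'s, tie-broken by maximizing $e(G\setminus V(\dR))$, splits the saturating edges into those meeting $V(\dR)$ and those inside $H_\dR=G\setminus V(\dR)$, lower-bounds the first by direct counting over the cliques of $\dR$ and the second via Jensen applied to the pairwise-disjoint common-neighborhood sets $A_i(R^*)$ of a clique $R^*$ sending many edges into $H_\dR$, and derives the final contradiction by a clique-switching augmentation (in the spirit of Hajnal--Szemer\'edi) showing that if all $p$ sets $A_i(R^*)$ were nonempty one could grow a $K_p$ inside the $K_p$-free graph $H_\dR$. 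You would need to either adopt such a packing-based argument or prove a genuinely new stability theorem before your plan can go through.
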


We refer readers to the beginning of Section~4 for a proof sketch of this theorem.

We use standard notations on graphs throughout the paper.
Let $G$ be a graph.
For a subset $U\subseteq V(G)$, the subgraph of $G$ induced by the vertex set $U$ is denoted by $G[U]$,
while the subgraph obtained from $G$ by deleting all vertices in $U$ is expressed by $G\backslash U$.
Let $N_{G}(U)=\bigcap_{v\in U} N_{G}(v)$ be {\it the common neighborhood} of all vertices of $U$ in $G$.
Suppose that $U,W$ are two disjoint vertex subsets in $G$.
We denote $E_{G}(U,W)$ to be the set of edges of $G$ between $U$ and $W$ and let $e_{G}(U,W)=|E_{G}(U,W)|$.
We often drop the above subscripts when they are clear from context.
For positive integers $k$, we write $[k]$ for the set $\{1,2,...,k\}$ and
the notation $\binom{x}{2}$ means the function $x(x-1)/2$ for all reals $x$.
We often omit floors and ceilings whenever they are not critical.

The rest of the paper is organized as follows.
In Section~2, we provide constructions which match with the upper bounds of Theorems~\ref{Thm:main} and \ref{Thm:thorough version}.
In Section~3, we prove Theorem~\ref{Thm:main} by using Theorem~\ref{Thm:thorough version}.
In Section~4, we give a complete proof of Theorem~\ref{Thm:thorough version}.
Finally, we conclude the paper with some remarks.

	\section{The constructions for the upper bounds}
	
	In this section, we establish the upper bounds of Theorems~\ref{Thm:main} and \ref{Thm:thorough version} by defining some explicit $K_{p+1}$-free graphs.
	These graphs are suggested by Balogh and Liu in \cite{BL},
	each of which is an appropriate blow-up of the following graph:
	take a complete $(p-1)$-partite graph $K=K_{2,...,2}$ and add a new vertex by making it adjacent to exactly one vertex in each partite set of $K$.

	In the rest of this section, we write $n=p(p-1)(4p^2-11p+8)x+y,$ where $x, y$ are integers such that $x\geq 0$ and $0\le y<p(p-1)(4p^2-11p+8).$
	By Tur\'an's Theorem, we have $\ex(n,K_p)=\frac{p-2}{2(p-1)}\cdot p^2(p-1)^2(4p^2-11p+8)^2x^2+p(p-2)(4p^2-11p+8)xy+t_{p-1}(y),$ where $t_{p-1}(y)=e(T_{p-1}(y)).$

	First, we prove the desired upper bound of Theorem~\ref{Thm:thorough version}.
	
	\medskip	
	
	\noindent\textbf{The upper bound of Theorem~\ref{Thm:thorough version}.}
	In this case, we will construct an $n$-vertex $K_{p+1}$-free graph $H_1$ with exactly $\ex(n,K_p)$ edges and $f_{p+1}(H_1)=\frac{2(p-2)^2}{p(4p^2-11p+8)}n^2-\frac{(p-2)(2p-3)}{p(4p^2-11p+8)}n+O_p(1)$.
	
	To do so, we first construct a graph $H_0$ as follows (see Figure 1).
	First, take a $(p-1)$-partite complete graph $K_{2,...,2}$ with vertex set $\{v_i,u_i:i\in[p-1]\},$ where $v_i$ and $u_i$ are in the same part for $i\in[p-1]$.
	Next, take a new vertex $v_0$ and make it adjacent to each $v_i$ for $i\in [p-1]$.
	Finally, let $H_0$ be obtained by blowing-up $v_0$ into an independent set $V_0$ of size $2(p-1)(p-2)^2x,$
	blowing-up each $v_i$ into an independent set $V_i$ of size $4(p-1)^2(p-2)x$ for $i\in [p-1],$
	blowing-up each $u_i$ into an independent set $U_i$ of size $p(3p-4)x$ for $i\in[p-1].$
	We can check that $H_0$ is $K_{p+1}$-free on $p(p-1)(4p^2-11p+8)x$ vertices with $\frac{p-2}{2(p-1)}\cdot p^2(p-1)^2(4p^2-11p+8)^2x^2$ edge.\footnote{All the detailed calculations in this proof can be found in Appendix A.}
	
	Next we construct the desired graph $H_1$ from $H_0$ by enlarging the size of $V_0$ with $2y$ more new vertices and deleting $y$ vertices which form a $T_{p-1}(y)$ in $H_0\left[\bigcup_{i=1}^{p-1} U_i\right].$
	Indeed, since $n\ge 8p^5$ and by our definitions of $x$ and $y,$
	we can check that $p(p-1)(3p-4)x>y.$
	Thus, the above deletion process succeed.
	By a careful calculation, we can derive that
	$H_1$ is $K_{p+1}$-free on $n$ vertices with $\ex(n,K_p)$ edges.
	The only $K_{p+1}$-saturating edges are the pairs in $V_i$ for $0\le i\le p-1$. This (see Appendix A) leads to
    \begin{align*}	
	f_{p+1}(H_1)&=\frac{2(p-2)^2}{p(4p^2-11p+8)}n^2-\frac{(p-2)(2p-3)}{p(4p^2-11p+8)}n+\frac{8(p-1)^3}{p(4p^2-11p+8)}y^2-\frac{2(p-1)^2}{4p^2-11p+8}y\\
	&=\frac{2(p-2)^2}{p(4p^2-11p+8)}n^2-\frac{(p-2)(2p-3)}{4p^2-11p+8}n+O_p(1),
    \end{align*}
	completing the proof for the upper bound. \QED

	\begin{center}
		\begin{tikzpicture}[-,>=stealth',shorten >=1pt,auto,node distance=50pt, semithick,scale=1.3]
			
			\node[circle, scale=1.15pt, draw=black, line width=1pt, outer sep=0]                 (A) at (0,1.7)         {$V_0$};
			\node[circle, scale=1.4pt, draw=black, line width=1pt, outer sep=0]                   (B) at (-2.5,0)      {$V_1$};
			\node[circle, scale=1.4pt, draw=black, line width=1pt, outer sep=0]                   (C) at (-1.1,0)      {$V_2$};
			\node[circle, scale=0.3pt, fill=black]                                              (D) at (-0.3,0)      {};
			\node[circle, scale=0.3pt, fill=black]                                              (E) at (0,0)         {};
			\node[circle, scale=0.3pt, fill=black]                                              (F) at (0.3,0)       {};
			\node[circle, scale=1.4pt, draw=black, line width=1pt, inner sep=0.3pt, outer sep=0]  (G) at (1.1,0)       {$V_{p-2}$};
			\node[circle, scale=1.4pt, draw=black, line width=1pt, inner sep=0.3pt, outer sep=0]  (H) at (2.5,0)       {$V_{p-1}$};
			\node[circle, scale=0.8pt, draw=black, line width=1pt, outer sep=0]                   (I) at (-2.5,-2)     {$U_1$};
			\node[circle, scale=0.8pt, draw=black, line width=1pt, outer sep=0]                   (J) at (-1.1,-2)     {$U_2$};
			\node[circle, scale=0.3pt, fill=black]                                              (K) at (-0.3,-2)     {};
			\node[circle, scale=0.3pt, fill=black]                                              (L) at (0,-2)        {};
			\node[circle, scale=0.3pt, fill=black]                                              (M) at (0.3,-2)      {};
			\node[circle, scale=0.8pt, draw=black, line width=1pt, inner sep=0.3pt, outer sep=0]  (N) at (1.1,-2)      {$U_{p-2}$};
			\node[circle, scale=0.8pt, draw=black, line width=1pt, inner sep=0.3pt, outer sep=0]  (O) at (2.5,-2)      {$U_{p-1}$};
			
			\draw (0.1,2.05) node [above,scale=1.2pt] {$2(p-1)(p-2)^2x$};
			\draw (-3,0) node [left,scale=1.2pt] {$4(p-1)^2(p-2)x$};
			\draw (3,0) node [right,scale=1.2pt] {$4(p-1)^2(p-2)x$};
			\draw (2.8,-2) node [right,scale=1.1pt] {$p(3p-4)x$};
			\draw (-2.8,-2) node [left,scale=1.1pt] {$p(3p-4)x$};

			\path (A) edge[line width=2.2pt] (B);
			\path (A) edge[line width=2.2pt] (C);
			\path (A) edge[line width=2.2pt] (G);
			\path (A) edge[line width=2.2pt] (H);
			
			\path (B) edge[line width=2.2pt] (C);
			\path (B) edge[bend left, line width=2.2pt] (H);
			\path (B) edge[bend left, line width=2.2pt] (G);
			\path (C) edge[bend left, line width=2.2pt] (H);
			\path (C) edge[bend left, line width=2.2pt] (G);
			\path (G) edge[line width=2.2pt] (H);

			\path (B) edge[line width=2.2pt] (J);
			\path (B) edge[line width=2.2pt] (N);
			\path (B) edge[line width=2.2pt] (O);
			\path (C) edge[line width=2.2pt] (I);
			\path (C) edge[line width=2.2pt] (N);
			\path (C) edge[line width=2.2pt] (O);
			\path (G) edge[line width=2.2pt] (I);
			\path (G) edge[line width=2.2pt] (J);
			\path (G) edge[line width=2.2pt] (O);
			\path (H) edge[line width=2.2pt] (I);
			\path (H) edge[line width=2.2pt] (J);
			\path (H) edge[line width=2.2pt] (N);
			
			\path (I) edge[line width=2.2pt] (J);
			\path (I) edge[bend right, line width=2.2pt] (N);
			\path (I) edge[bend right, line width=2.2pt] (O);
			\path (J) edge[bend right, line width=2.2pt] (N);
			\path (J) edge[bend right, line width=2.2pt] (O);
			\path (N) edge[line width=2.2pt] (O);

		\end{tikzpicture}
		
	\end{center}
	
	\begin{center}
		\textbf{Figure 1.} Constructions for the upper bounds of Theorem \ref{Thm:main} and \ref{Thm:thorough version}.
	\end{center}

	The construction for the upper bound of Theorem~\ref{Thm:main} is quite similar to the one above. The only differences are the sizes of the parts in the blow-up.
	
	\bigskip
	
	\noindent\textbf{The upper bound of Theorem~\ref{Thm:main}.}
	In this case, we will construct an $n$-vertex $K_{p+1}$-free graph $G$ with exactly $\ex(n,K_p)+1$ edges and $f_{p+1}(G)=\frac{2(p-2)^2}{p(4p^2-11p+8)}n^2-\frac{(p-2)(2p^2-5p+4)}{p(4p^2-11p+8)}n+O_p(1).$
	
	To do so, we first let $H_0$ be the same as above (see Figure 1).
	We then construct a graph $H_2$ from $H_0$ by enlarging the size of $V_0$ with $2y+1$ more new vertices and deleting $y+1$ vertices which forms a $T_{p-1}(y+1)$ in $H_0\left[\bigcup_{i=1}^{p-1} U_i\right].$
	By a similar calculation as the previous case, one can derive that
	$H_2$ is $K_{p+1}$-free on $n$ vertices with $$\ex(n,K_p)+\frac{(p-2)^3}{p(p-1)(4p^2-11p+8)}n+O_p(1)$$ edges.
	Again, the only $K_{p+1}$-saturating edges are the pairs in $V_i$ for $0\le i\le p-1$.
	By some careful calculation, one can derive that
	$$f_{p+1}(H_2)=\frac{2(p-2)^2}{p(4p^2-11p+8)}n^2-\frac{(p-2)(2p^2-5p+4)}{p(4p^2-11p+8)}n+O_p(1).$$
	Now one can easily remove some edges from $H_2$ (i.e., edges incident with vertices in $U_i$'s)
	without changing the number of $K_{p+1}$-saturating edges until the remaining graph $G$ has exactly $\ex(n,K_p)+1$ edges.
	In this way, we obtain the desired graph $G$ with $f_{p+1}(G)=f_{p+1}(H_2)$ and thus prove that $$f_{p+1}\big(n,\ex(n, K_p)+1\big)\leq\frac{2(p-2)^2}{p(4p^2-11p+8)}n^2-\frac{(p-2)(2p^2-5p+4)}{p(4p^2-11p+8)}n+O_p(1)$$
	holds for all integers $n$.\QED

	\section{Proof of Theorem \ref{Thm:main}}
	In this section, assuming Theorem~\ref{Thm:thorough version}, we complete the proof of Theorem \ref{Thm:main}.
	The upper bound of Theorem \ref{Thm:main} is given by the last section, so it suffices to prove the lower bound.
	Let $G$ be a $K_{p+1}$-free graph with $\ex(n,K_p)+1$ edges. By Tur\'an's Theorem, $G$ contains a copy of $K_p.$
	Let $G'$ be obtained from $G$ by removing a single edge such that
	$G'$ still contains a $K_p.$
	Then $G'$ is $K_{p+1}$-free with $\ex(n,K_p)$ edges.
	As $G'$ contains a $K_p,$ it cannot be the Tur\'an graph $T_{p-1}(n)$.
	By Theorem \ref{Thm:thorough version}, we have
	$$f_{p+1}(G)\ge f_{p+1}(G')\ge\frac{2(p-2)^2}{p(4p^2-11p+8)}n^2-\frac{(p-2)(2p-3)}{4p^2-11p+8}n+O_p(1),$$
	finishing the proof of Theorem~\ref{Thm:main}. \QED
	\medskip
	
	We remark that the above proofs actually show a sharper bound than the statement of Theorem~\ref{Thm:main}. Namely, for all integers $p\ge 3$ and $n$, if we write
	$$g_p(n)=f_{p+1}\big(n,\ex(n,K_p)+1\big)-\frac{2(p-2)^2}{p(4p^2-11p+8)}n^2,$$
	then we have $$-\frac{(p-2)(2p-3)}{4p^2-11p+8}n+O_p(1)\le g_p(n)\le -\frac{(p-2)(2p^2-5p+4)}{p(4p^2-11p+8)}n+O_p(1)$$ such that $g_p(n)\rightarrow -\frac{n}{2}+o(n)$ as $p\rightarrow\infty.$

\section{Proof of Theorem \ref{Thm:thorough version}}
We begin with a sketch of the proof of Theorem \ref{Thm:thorough version}.
Let $G$ be an $n$-vertex $K_{p+1}$-free graph with $\ex(n,K_p)$ edges and containing at least one copy of $K_p$.
Following the approach of \cite{BL}, we partition the vertex set of $G$ into two parts $V(\dR)$ and its complement $V(G)\backslash V(\dR)$, where $\dR$ is a maximum family of vertex-disjoint $K_p$'s in $G$ and $V(\dR)$ denotes the set of all vertices contained in $\dR$.
Then all $K_p$-saturating edges of $G$ can be divided into two types, the first type of which are those $K_p$-saturating edges incident to $V(\dR)$ and the other type are those contained in $V(G)\backslash V(\dR).$
Estimations on the number of saturating edges of these two types have been established in \cite{BL}, respectively, which work quite well when $p$ is small.
The problem is that when $p$ is getting bigger, the complexity of computations based on these estimations will be difficult to handle.
So some novel ideas will be needed.
A key motivation for us comes after Lemma~\ref{Lem:<p-joint-book},
which roughly says that for any $p$-clique $R$ in $\dR$,
as long as there are enough edges between $R$ and $V(G)\backslash V(\dR)$,
any $p-1$ vertices of $R$ have some common neighbors in $V(G)\backslash V(\dR)$ (it can even be set up as $\Omega(1)$ many if required).
Therefore one may hope to use similar proof ideas as in Hajnal-Szemer\'edi Theorem to find a larger collection of vertex-disjoint $p$-cliques than $\dR$ and thus obtain a contradiction to the maximality of $\dR$.
This indeed would work.
And it turns out that if we choose $\dR$ with an additional requirement on the number of edges contained in $V(G)\backslash V(\dR)$,
then the proof can be shortened and a contradiction can already be reached using some appropriate vertex-switching techniques (in fact this would provide a shortcut for the proof in \cite{BL} for the case $p=3$ as well).

Throughout the rest of this section, we present the proof of Theorem \ref{Thm:thorough version}.
The upper bounds of Theorem \ref{Thm:thorough version} are given by the aforementioned constructions.
Consider any integers $p\ge 3$ and $n\ge 120p^2.$ (As $n\ge 8p^5\ge 120 p^2,$ here we remark that $n\ge 120 p^2$ is enough to show the lower bounds of Theorem \ref{Thm:thorough version}).
Let $G$ be any $n$-vertex $K_{p+1}$-free graph with $\ex(n,K_p)$ edges, but not the $(p-1)$-partite Tur\'an graph $T_{p-1}(n)$.
It suffices to show that $f_{p+1}(G)$ is bounded from below by the desired formula.
By Tur\'an's Theorem, $G$ contains at least one copy of $K_p$ with
\begin{equation}\label{equ:e(G)}
	e(G)=\ex(n, K_p)=\frac{p-2}{2(p-1)}n^2-\delta,
\end{equation}
where $\delta=\frac{t(p-1-t)}{2(p-1)}$ for $t\in \{0,1,...,p-2\}$ with $t\equiv n\mod (p-1)$.
We note that $0\leq \delta\leq \frac{p-1}{8}$, and
$\delta=0$ if and only if $n$ is divisible by $p-1$.

	We now partition $V(G)$ into two parts $V(\dR)$ and $V(G)\backslash V(\dR)$ satisfying the following conditions
\begin{itemize}
	\item [(\rmnum{1}).] $\dR$ is a maximum family of vertex-disjoint $K_p$'s in $G$, and\footnote{Throughout we will write $V(\dR)$ for the union of the vertex sets of all $K_p$'s in $\dR.$}
	\item [(\rmnum{2}).] subject to (\rmnum{1}), the remaining graph $G\backslash V(\dR)$ has the maximum number of edges.
\end{itemize}
Let $H_\dR:=G\backslash V(\dR)$ and $|\dR|:=rn$. Since $G$ contains a $K_p$, we have
\begin{equation}\label{Equ:range of r}
	1/n\le r\le 1/p.
\end{equation}
By the choice of (\rmnum{1}), we know that $H_\dR$ is $K_p$-free with $(1-pr)n$ vertices, thus by Tur\'an's Theorem,
\begin{equation}\label{Equ:e(H)}
	e(H_\dR)\le \frac{(p-2)}{2(p-1)}(1-pr)^2n^2.
\end{equation}

	For any $p$-clique $R\in \dR$ and $0\le j\le p,$
	we let $$Z_j(R)=\{\mbox{all vertices in $H_{\dR}$ that has exactly $j$ neighbors in $V(R)$}\} \mbox{ ~and ~} z_j(R):=|Z_j(R)|/n.$$
	By the assumption that $G$ is $K_{p+1}$-free, it is clear that $Z_p(R)=\emptyset$.
	So for any $p$-clique $R\in \dR$,
	\begin{equation}\label{Equ:sum z_j}
		\sum_{j=0}^{p-1}z_j(R)=1-pr.
	\end{equation}
    We will also need to consider a refined partition of $Z_{p-1}(R)$ as follows.
    Let $\{v_1,v_2,...,v_{p}\}$ represent the vertex set of a given $p$-clique $R\in \dR$.
    For any $i\in [p],$ define $$A_i(R):= N_{H_{\dR}}(R\backslash\{v_i\})$$ to be the common neighborhood of $V(R)\backslash \{v_i\}$ in $V(H_{\dR})$.
    Let us observe that $A_i(R)$'s are pairwise vertex-disjoint independent sets in $Z_{p-1}(R)$ (for otherwise
    $\big(\bigcup_i A_i(R)\big)\cup R$ would contain a copy of $K_{p+1}$, a contradiction to $G$ is $K_{p+1}$-free).
    In particular, we have
    \begin{equation}\label{Equ:sum A_i}
    	\sum_{i=1}^{p}|A_i(R)|/n=z_{p-1}(R).
    \end{equation}
    It is crucial to see that every non-edge inside each $A_i(R)$ is a $K_{p+1}$-saturating edge in $G$.

    	\begin{center}
    	\begin{tikzpicture}[-,>=stealth',shorten >=1pt,auto,node distance=50pt, semithick,scale=1.4]
    		
    		\draw [line width=2pt, pattern=north west lines] (0.3,-1) rectangle (1,1);
    		\draw [line width=2pt] (-1,-1) rectangle (0.3,1);
    		\draw [line width=2pt, pattern=north east lines] (2.5,-1) rectangle (3.2,1);
    		\draw [line width=2pt] (3.2,-1) rectangle (5.8,1);		
    		\draw [color=red, -{>[sep=0pt]}, line width=2pt] (0.66,0.6) to [bend left=20] (2.8,0.6);
    		\draw [color=red, -{>[sep=0pt]}, line width=2pt] (2.8,-0.6) to [bend left=20] (0.66,-0.6);
    		\draw (0.05,1.5) node [below ,scale=1.2pt] {$R$};
    		\draw (4.25,1.5) node [below ,scale=1.2pt] {$H_{\dR}$};
    		\draw (0.35,-1) node [below right,scale=1.2pt] {$C$};
    		\draw (0.25,-1) node [below left,scale=1.2pt] {$R\backslash C$};
    		\draw (2.6,-1) node [below right,scale=1.2pt] {$C'$};
    		\draw (5,-1) node [below left,scale=1.2pt] {$H_{\dR}\backslash C'$};
    		
    	\end{tikzpicture}
    \end{center}

    \begin{center}
    	\textbf{Figure 2.} The proof of Lemma~\ref{Lem:Key lemma for p-joint-book}
    \end{center}

	The following lemma is key in our proof. It shows that by the choice of $\dR$ and $H_{\dR}$,
there are enough many edges incident to new $p$-cliques obtained from some $R\in \dR$ by switching some vertices in $R$ with vertices in $H_{\dR}$ of equal size.

\begin{lem}\label{Lem:Key lemma for p-joint-book}
	Let $R\in \dR$ be a $p$-clique and $C$ be a subclique of $R$.
	If there exists a clique $C'$ in $H_{\dR}$ of equal size as $C$ such that $R':=(R\backslash C)\cup C'$ remains a clique in $G,$
	then $\dR':=\left(\dR\backslash\{R\}\right)\cup\{R'\}$ is also a maximum family of vertex-disjoint $K_p$'s in $G$ with
	$e(R',H_{\dR'})\ge e(R,H_{\dR}),$ where $H_{\dR'}=G\backslash V(\dR')$
\end{lem}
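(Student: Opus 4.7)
The plan is to reduce both claims to the two defining properties of $\dR$: property (i) (maximality of $|\dR|$) delivers the structural statement, and property (ii) (maximality of $e(H_\dR)$ among such maximum families) delivers the edge inequality via a short bookkeeping argument.

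For the structural part, I would check directly that $\dR'$ is again a maximum family of vertex-disjoint $K_p$'s. Since $R\setminus C \subseteq V(R)$ is disjoint from every other clique of $\dR$, and $C'\subseteq V(H_\dR)$ is disjoint from the entire set $V(\dR)$, the new clique $R'=(R\setminus C)\cup C'$ is disjoint from every member of $\dR\setminus\{R\}$. Combined with the hypothesis that $R'$ is itself a $p$-clique in $G$, this exhibits $|\dR|$ vertex-disjoint copies of $K_p$ in $G$, so $\dR'$ is also maximum.

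For the edge inequality, observe that $V(H_{\dR'}) = (V(H_\dR)\setminus C')\cup C$, so property (ii) applied to $\dR'$ gives $e(H_\dR)\ge e(H_{\dR'})$. The edges induced on the common vertex set $V(H_\dR)\setminus C'$ contribute identically to both sides, and the intra-clique counts $\binom{|C|}{2}=\binom{|C'|}{2}$ also cancel, reducing the inequality to
$$e_G\bigl(C',\, V(H_\dR)\setminus C'\bigr) \;\ge\; e_G\bigl(C,\, V(H_\dR)\setminus C'\bigr).$$
I would then decompose $e(R, H_\dR)$ and $e(R', H_{\dR'})$ according to $R=(R\setminus C)\cup C$, $R'=(R\setminus C)\cup C'$, and the analogous splittings of $V(H_\dR)$ and $V(H_{\dR'})$. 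The contribution $e_G(R\setminus C,\, V(H_\dR)\setminus C')$ and the symmetric term $e_G(C,C')=e_G(C',C)$ appear on both sides and cancel. The remaining cross contributions $e_G(R\setminus C, C)$ and $e_G(R\setminus C, C')$ are each equal to $|R\setminus C|\cdot|C|$, since $R$ and $R'$ are both complete and $|C|=|C'|$, so they also cancel. What is left is exactly the non-negative difference displayed above, yielding $e(R',H_{\dR'})\ge e(R,H_\dR)$.

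I do not foresee any genuine obstacle: the argument is extremality plus careful accounting. The one conceptual point worth emphasizing is that, because $R$ and $R'$ are both $p$-cliques, the bipartite joins between $R\setminus C$ and each of $C$, $C'$ are complete (hence equal), which is precisely what makes the messy-looking calculation collapse cleanly onto the single use of property (ii).
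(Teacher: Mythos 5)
Your proposal is correct and follows essentially the same route as the paper: maximality of $\dR'$ is immediate from disjointness, property (ii) applied to $H_{\dR'}=(H_\dR\setminus C')\cup C$ yields $e\bigl(C',V(H_\dR)\setminus C'\bigr)\ge e\bigl(C,V(H_\dR)\setminus C'\bigr)$ after cancelling $e(C)=e(C')$, and the final inequality follows from $e(R\setminus C,C)=e(R\setminus C,C')$ exactly as in the paper's proof. No gaps.
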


	\begin{proof}
		First observe that $\dR'$ is also a maximum family of $rn$ vertex-disjoint $K_p$'s.
		Let $H_{\dR'}=G\backslash V(\dR')$. So $H_{\dR'}=(H_\dR\backslash C')\cup C$ (see Figure 2).
		By (ii), we have $e(H_{\dR})\ge e(H_{\dR'}).$
		Since $e(C')=e(C)$,
		\begin{center}
			$e(H_{\dR})=e(C')+e(C', H_{\dR}\backslash C')+e(H_{\dR}\backslash C')$ and  $e(H_{\dR'})=e(C)+e(C, H_{\dR}\backslash C')+e(H_{\dR}\backslash C')$,
		\end{center}
	it follows that
	\begin{equation}
		e\left(C',H_{\dR}\backslash C'\right)\ge e\left(C,H_{\dR}\backslash C'\right).\nonumber
	\end{equation}
Therefore, as $e(R\backslash C,C')=e(R\backslash C,C)$, one can derive that
\begin{align*}
	e(R',H_{\dR'})-e(R,H_{\dR})
	=e\left(C',H_{\dR}\backslash C' \right)- e\left(C,H_{\dR}\backslash C' \right)
	\ge 0.
\end{align*}
This completes the proof of Lemma \ref{Lem:Key lemma for p-joint-book}.
	\end{proof}

	Next we proceed to prove three technical lemmas and we should emphasize in advance that these lemmas hold for any family $\dR$ solely satisfying the condition~(\rmnum{1}). The first one says that for any family $\dR$ satisfying the condition~(\rmnum{1}), there is a $R^*\in \dR$ such that $e(R^*,H_{\dR})$ is large.

\begin{lem}\label{Lem:e(R^*,H)}
	Suppose that $\dR$ is under the condition (\rmnum{1}) and $H_{\dR}=G\backslash V(\dR).$
	Then there exists a $p$-clique $R^*\in \dR$ such that
	\begin{equation}\label{Equ:e(R^*,H)}
		e(R^*,H_{\dR})\ge \left(\frac{p(p-2)}{p-1}-\frac{p(2p^2-4p+1)}{2(p-1)}r\right)n-\frac{\delta}{rn}.
	\end{equation}
	Moreover, for any $R^*\in \dR$ satisfying \eqref{Equ:e(R^*,H)}, it holds that
	\begin{equation}\label{Equ:z{p-1}}
		z_{p-1}(R^*)
		\ge\frac{p-2}{p-1}-\frac{p(2p-3)}{2(p-1)}r-\frac{\delta}{rn^2}.
	\end{equation}
\end{lem}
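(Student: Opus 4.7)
The plan is to establish \eqref{Equ:e(R^*,H)} first, by a global edge-count combined with an averaging argument over $\dR$, and then to deduce \eqref{Equ:z{p-1}} from it by a short weighted-count on $V(R^*)$. For \eqref{Equ:e(R^*,H)}, I will compute $e(V(\dR), H_\dR)$ through the identity $e(G) = e(G[V(\dR)]) + e(V(\dR), H_\dR) + e(H_\dR)$. The three terms on the right each admit a convenient bound: $e(G)$ is given exactly by \eqref{equ:e(G)}; the maximality condition (\rmnum{1}) forces $H_\dR$ to be $K_p$-free on $(1-pr)n$ vertices, so Tur\'an's Theorem yields \eqref{Equ:e(H)}; and since $G$ is $K_{p+1}$-free, so is $G[V(\dR)]$ on $prn$ vertices, giving $e(G[V(\dR)]) \le \frac{p(p-1)}{2}r^2 n^2$. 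Substituting and simplifying (the key algebraic identity being $p^2(p-2)+p(p-1)^2=p(2p^2-4p+1)$) yields
$$e(V(\dR), H_\dR) \ge \frac{p(p-2)}{p-1}rn^2 - \frac{p(2p^2-4p+1)}{2(p-1)}r^2 n^2 - \delta.$$
Since $|\dR|=rn$, dividing by $rn$ and selecting an $R^* \in \dR$ whose $e(R^*, H_\dR)$ is at least the average delivers \eqref{Equ:e(R^*,H)}.

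For \eqref{Equ:z{p-1}}, I will fix any $R^*$ satisfying \eqref{Equ:e(R^*,H)} and count edges from $V(R^*)$ to $H_\dR$ by the contribution of each vertex of $H_\dR$. Since $Z_p(R^*)=\emptyset$, this yields $e(R^*,H_\dR) = \sum_{j=0}^{p-1} j\,|Z_j(R^*)|$. Using $j \le p-2$ for $j \le p-2$ together with \eqref{Equ:sum z_j} gives the upper bound $e(R^*,H_\dR) \le (p-2)(1-pr)n + z_{p-1}(R^*) n$. Combining this with \eqref{Equ:e(R^*,H)} isolates $z_{p-1}(R^*) n$, and a short simplification using $\frac{p(p-2)}{p-1}-(p-2)=\frac{p-2}{p-1}$ and $-\frac{p(2p^2-4p+1)}{2(p-1)}+p(p-2)=-\frac{p(2p-3)}{2(p-1)}$ reduces the result precisely to \eqref{Equ:z{p-1}}.

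No step presents a real obstacle: the proof hinges on applying Tur\'an's Theorem twice (once to $G[V(\dR)]$ and once to $H_\dR$) together with the fact that $Z_p(R^*)=\emptyset$ by $K_{p+1}$-freeness. The only care needed is in book-keeping the coefficients of $r$, $r^2$, and the error term $\delta$ through the combined estimates and the averaging step, so that the averaging divisor $rn$ produces the stated $\delta/(rn)$ and $\delta/(rn^2)$ remainders in \eqref{Equ:e(R^*,H)} and \eqref{Equ:z{p-1}} respectively.
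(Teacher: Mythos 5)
Your proposal is correct and follows essentially the same route as the paper: the identical three-way edge decomposition with Tur\'an bounds on $G[V(\dR)]$ and $H_{\dR}$, averaging over the $rn$ cliques of $\dR$, and then bounding $e(R^*,H_{\dR})=\sum_j j\,|Z_j(R^*)|$ by $z_{p-1}(R^*)n+(p-2)(1-pr)n$ using $Z_p(R^*)=\emptyset$. The coefficient bookkeeping you indicate matches the paper's computation exactly.
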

	
	\begin{proof}
	Note that the edge set of $G$ can be partitioned into $E(H_{\dR}),$ $E(V(\dR),H_{\dR})$ and $E(G[V(\dR)]).$
	Since $G$ is $K_{p+1}$-free, by Tur\'an's Theorem $e(G[V(\dR)])\le \ex(prn,K_{p+1})= \binom p 2 r^2n^2.$
	Together with \eqref{equ:e(G)} and \eqref{Equ:e(H)}, we have that
	\begin{align*}
		e(V(\dR),H_{\dR})
		&=e(G)-e(H_{\dR})-e(G[V(\dR)])\\
		&\ge
		\left(\frac{p-2}{2(p-1)}-\frac{(p-2)}{2(p-1)}(1-pr)^2-\frac{p(p-1)}{2}r^2\right)n^2-\delta\\
		&=\left(\frac{p(p-2)}{p-1}r-\frac{p(2p^2-4p+1)}{2(p-1)}r^2\right)n^2-\delta.
	\end{align*}
	By averaging, there exists a clique $R^*\in\dR$ with $$e(R^*,H_{\dR})\ge \frac{e(V(\dR),H_{\dR})}{rn}
	\ge\left(\frac{p(p-2)}{p-1}-\frac{p(2p^2-4p+1)}{2(p-1)}r\right)n-\frac{\delta}{rn}.$$
	As $G$ is $K_{p+1}$-free, every vertex in $H_{\dR}$ has at most $r-1$ neighbors in $V(R^*)$. So we have $e(R^*,H_\dR)\leq |Z_{p-1}(R^*)|+(p-2)\sum_{j=0}^{p-1}|Z_j(R^*)|$,
	which by \eqref{Equ:sum z_j} implies that
	$$z_{p-1}(R^*)\ge \frac{e(R^*,H_{\dR})}{n}-(p-2)(1-pr) \ge \frac{p-2}{p-1}-\frac{p(2p-3)}{2(p-1)}r-\frac{\delta}{rn^2}.$$
	This completes the proof of Lemma \ref{Lem:e(R^*,H)}.
\end{proof}

	Denote by $\ell_1^{\dR}$ the number of $K_{p+1}$-saturating edges incident to $V(\dR)$,
and by $\ell_2^{\dR}$ the number of $K_{p+1}$-saturating edges in $H_{\dR}$.
Obviously $f_{p+1}(G)=\ell_1^{\dR}+\ell_2^{\dR}$.
The lemma below gives a lower bound on $\ell_1^{\dR}$, which in particular shows that Theorem~\ref{Thm:thorough version} holds in case $r$ is close to $1/p$.
\begin{lem}\label{Lem:r_1n^2}
	Suppose that $\dR$ is under the condition (\rmnum{1}).
	Then
	$$\ell_1^{\dR}
	\ge\left(\frac{p-2}{p-1}r-\frac{p(p-2)}{2(p-1)}r^2\right)n^2-\frac{pr}{2}n-\delta.$$
	Moreover, if $r>\frac{2(p-2)(2p-3)}{p(4p^2-11p+8)},$ then Theorem~\ref{Thm:thorough version} holds.
\end{lem}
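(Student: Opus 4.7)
The plan is to lower bound $\ell_1^{\dR}$ by exhibiting ``canonical'' saturating edges incident to each $R\in\dR$ and carefully handling the double-counting. For $R\in\dR$ with $V(R)=\{v_1,\ldots,v_p\}$ and $i\in[p]$, define
$$B_i(R):=\{w\in V(G)\setminus V(R)\colon N_G(w)\cap V(R)=V(R)\setminus\{v_i\}\};$$
every $w\in B_i(R)$ yields a saturating edge $v_iw$ (witnessed by $V(R)\setminus\{v_i\}$). Split $B_i(R)=A_i(R)\sqcup B_i^*(R)$ with $B_i^*(R):=B_i(R)\cap V(\dR)$, and decompose $\ell_1^{\dR}=\ell_{\text{ext}}+\ell_{\text{int}}$ according to whether the saturating edge has one or both endpoints in $V(\dR)$. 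Edges produced by the various $A_i(R)$'s are all distinct (one endpoint in $V(H_\dR)$, the other in a unique $V(R)$), giving $\ell_{\text{ext}}\ge \sum_R |Z_{p-1}(R)|$; whereas each edge produced by some $B_i^*(R)$ may be recognized from at most two cliques of $\dR$ (one per endpoint in $V(\dR)$), yielding $\ell_{\text{int}}\ge \tfrac{1}{2}\sum_R \beta(R)$ with $\beta(R):=\sum_i|B_i^*(R)|$. The crucial structural point is this separate treatment: a uniform bound $\ell_1^{\dR}\ge\tfrac{1}{2}\sum_R(|Z_{p-1}(R)|+\beta(R))$ would halve the leading coefficient and fall short of the claimed inequality.

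Next I would bound the two sums uniformly. From $|N_G(u)\cap V(R)|\le p-1$ for $u\notin V(R)$ (by $K_{p+1}$-freeness),
$$e(R,H_\dR)\le |Z_{p-1}(R)|+(p-2)(1-pr)n \text{\quad and\quad } e(R,V(\dR)\setminus V(R))\le \beta(R)+(p-2)(prn-p),$$
giving the corresponding lower bounds on $|Z_{p-1}(R)|$ and $\beta(R)$. Summing over $R\in\dR$, using $\sum_R e(R,H_\dR)=e(V(\dR),H_\dR)$ and $\sum_R e(R,V(\dR)\setminus V(R))=2e^{\text{cross}}_\dR$ (with $e^{\text{cross}}_\dR$ the number of $G$-edges between distinct cliques of $\dR$), leads to
$$\ell_1^{\dR}\ge \big[e(V(\dR),H_\dR)+e^{\text{cross}}_\dR\big]-(p-2)(1-pr)rn^2-\tfrac{(p-2)(prn-p)rn}{2}.$$
Since $e(G[V(\dR)])=rn\binom{p}{2}+e^{\text{cross}}_\dR$, the bracket equals $e(G)-e(H_\dR)-rn\binom{p}{2}$; substituting \eqref{equ:e(G)} and the Tur\'an bound \eqref{Equ:e(H)} and simplifying produces the claimed inequality (the $r^2n^2$-coefficient collapses from the naive $-\tfrac{p(2p-3)}{2(p-1)}$ down to $-\tfrac{p(p-2)}{2(p-1)}$, precisely reflecting the extra $\beta(R)$ contribution).

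For the ``moreover'' part, $f_{p+1}(G)\ge \ell_1^{\dR}$, so it suffices to show the derived bound on $\ell_1^{\dR}$ exceeds the target of Theorem~\ref{Thm:thorough version} whenever $r>r^*:=\tfrac{2(p-2)(2p-3)}{p(4p^2-11p+8)}$. A direct computation gives $\tfrac{pr^*}{2}=\tfrac{(p-2)(2p-3)}{4p^2-11p+8}$, so the linear-in-$n$ terms match exactly at $r=r^*$; and setting $F(r):=\tfrac{(p-2)r(2-pr)}{2(p-1)}$, one finds $F(r^*)-\tfrac{2(p-2)^2}{p(4p^2-11p+8)}=\tfrac{2(p-2)^3}{p(4p^2-11p+8)^2}>0$. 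Since $F'(r)$ has the sign of $1-pr$, $F$ is increasing on $[0,1/p]$, so this $n^2$-slack only grows for $r\in(r^*,1/p]$ and comfortably absorbs the linear loss $\tfrac{p(r-r^*)}{2}n\le \tfrac{(3p-4)n}{2(4p^2-11p+8)}$ under the hypothesis $n\ge 120p^2$. The main obstacle throughout is not the algebra per se but the initial structural choice that limits the double-counting of internal saturating edges to a factor of exactly $2$; without this separation the leading coefficient degrades and the moreover statement fails.
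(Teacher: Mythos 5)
Your proposal is correct, and it arrives at exactly the same numerical bound as the paper: after summing your two inequalities over $R\in\dR$, the subtracted quantity per clique is $(p-2)\bigl[(1-pr)n+\tfrac{prn-p}{2}\bigr]=(p-2)\bigl[n-\tfrac{p(rn+1)}{2}\bigr]$, which coincides on average with what the paper subtracts, so the final formula is identical. The only genuine difference is the bookkeeping for double-counting: the paper orders the cliques as $R_1,\dots,R_{rn}$ and, for each $R_i$, only counts witnesses lying in $G\setminus\bigcup_{j\le i}R_j$, so every saturating edge is recognized from at most one clique (namely the earlier one containing an endpoint); you instead count symmetrically and pay a factor $\tfrac12$ only on the internal contribution $\sum_R\beta(R)$. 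Both devices capture the same structural point you correctly emphasize -- that cross edges between cliques of $\dR$ must not be halved together with the $H_\dR$-incident ones -- and both yield $\ell_1^{\dR}\ge e(G)-e(H_\dR)-\binom{p}{2}rn-(p-2)rn^2+\tfrac{p(p-2)}{2}(rn+1)rn$. Your treatment of the ``moreover'' part (matching the linear coefficients at $r^*$, computing the $n^2$-slack $\tfrac{2(p-2)^3}{p(4p^2-11p+8)^2}$, and using monotonicity of $F$ on $[0,1/p]$) is a clean, slightly sharper reorganization of the paper's computation, which instead crudely bounds $\tfrac{pr}{2}n\le\tfrac n2$ before comparing $n^2$-coefficients; just make sure to note explicitly that the residual term $\delta\le\tfrac{p-1}{8}$ is absorbed into the $O_p(1)$ (and vanishes in the divisible case), as the paper does.
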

	\begin{proof}
	Let $\dR=\{R_1, R_2,...,R_{rn}\}$, $H_{\dR}=G\backslash V(\dR)$ and $r_i=e(R_i,G\backslash\bigcup_{j=1}^iR_j)$ for $i\in [rn]$ such that
	$$\sum_{i=1}^{rn}r_i=e(G)-e(H_{\dR})-\binom p 2 rn.$$
	Since $G$ is $K_{p+1}$-free, every vertex has at most $p-1$ neighbors on each $R_i.$
	So there exist at least $r_i-(p-2)(n-pi)$ vertices in $G\backslash\bigcup_{j=1}^iR_j$ with exactly $p-1$ neighbors in $V(R_i),$ each of which contributes a $K_{p+1}$-saturating edges to $\ell_1^{\dR}$.
	Therefore, we have
	\begin{align*}
		\ell_1^{\dR}
		&\ge \sum_{i=1}^{rn}\left(r_i-(p-2)(n-pi)\right)\\
		&=\left(e(G)-e(H_{\dR})-\binom p 2 rn\right)
		-\left((p-2)rn^2-\frac{p(p-2)}{2}(rn+1)rn\right)\\
		&\ge\left(\frac{p-2}{2(p-1)}-\frac{(p-2)}{2(p-1)}(1-pr)^2-(p-2)r+\frac{p(p-2)}{2}r^2\right)n^2-\frac{pr}{2}n-\delta\\
		&=\left(\frac{p-2}{p-1}r-\frac{p(p-2)}{2(p-1)}r^2\right)n^2-\frac{pr}{2}n-\delta,
	\end{align*}
	where the last inequality follows from \eqref{equ:e(G)} and \eqref{Equ:e(H)}.
	
	For the second statement of this lemma, by \eqref{Equ:range of r} and the assumption therein, we have $\frac{2(p-2)(2p-3)}{p(4p^2-11p+8)}< r\le \frac 1p.$
	Then the first statement implies that
	\begin{align*}
		f_{p+1}(G)&\ge \ell_1^{\dR}
		\ge\left(\frac{p-2}{p-1}r-\frac{p(p-2)}{2(p-1)}r^2\right)n^2-\frac{pr}{2}n-\delta\\
		&\ge \frac{p-2}{p-1}\cdot r\left(1-\frac{pr}{2}\right)n^2-\frac{n}{2}-\delta\\
		&> \left(\frac{2(p-2)^2(2p-3)}{p(p-1)(4p^2-11p+8)}-\frac{2(p-2)^3(2p-3)^2}{p(p-1)(4p^2-11p+8)^2}\right)n^2-\frac{n}{2}-\delta\\
		&= \frac{4(p-1)(p-2)^2(2p-3)}{p(4p^2-11p+8)^2}n^2-\frac{n}{2}-\delta\\
		&\ge \frac{2(p-2)^2}{p(4p^2-11p+8)}n^2-\frac{(p-2)(2p-3)}{4p^2-11p+8}n-\delta,
	\end{align*}
	where the second last inequality holds because $g(r)= r(1-\frac{pr}{2})$ is increasing for $r\leq \frac1p$ and the last inequality holds whenever $n\ge 120p^2$. 
	This matches the lower bounds of Theorem~\ref{Thm:thorough version} (also for the case when $n$ is divisible by $p(p-1)(4p^2-11p+8)$, as for which $\delta=0$).
	Now Lemma \ref{Lem:r_1n^2} is completed.
\end{proof}

	The next lemma says that for any $R^*\in \dR$ satisfying the conclusion of Lemma~\ref{Lem:e(R^*,H)}, one may assume that the set $A_i(R^*)$ for every $i\in [p]$ is non-empty.
\begin{lem}\label{Lem:<p-joint-book}
	Suppose that $\dR$ is under the condition (\rmnum{1}). Let $R^*\in \dR$ be any clique satisfying \eqref{Equ:e(R^*,H)}. If there exists some $i\in [p]$ such that $A_i(R^*)=\emptyset$, then Theorem~\ref{Thm:thorough version} holds.
\end{lem}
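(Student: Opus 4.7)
The plan is to combine the lower bound on $\ell_1^{\dR}$ from Lemma~\ref{Lem:r_1n^2} with a lower bound on $\ell_2^{\dR}$ coming from non-edges inside the independent sets $A_j(R^*)$. Recall that the $A_j(R^*)$'s are pairwise disjoint, are independent, and every non-edge inside each $A_j(R^*)$ is a $K_{p+1}$-saturating edge in $H_{\dR}$, so $\ell_2^{\dR}\ge\sum_{j=1}^{p}\binom{|A_j(R^*)|}{2}$. Under the hypothesis $A_i(R^*)=\emptyset$, only $p-1$ of the summands are non-zero, and by \eqref{Equ:sum A_i} they sum to $z_{p-1}(R^*)n$; convexity therefore gives
\begin{equation*}
\ell_2^{\dR}\;\ge\;(p-1)\binom{z_{p-1}(R^*)n/(p-1)}{2}\;\ge\;\frac{z_{p-1}(R^*)^2n^2}{2(p-1)}-\frac{z_{p-1}(R^*)n}{2}.
\end{equation*}

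Lemma~\ref{Lem:r_1n^2} allows me to assume $r\le r_0:=\frac{2(p-2)(2p-3)}{p(4p^2-11p+8)}$, and \eqref{Equ:z{p-1}} permits the substitution $z_{p-1}(R^*)\ge\bar\tau(r):=\frac{p-2}{p-1}-\frac{p(2p-3)}{2(p-1)}r$ up to a small $O_p(\delta/(rn^2))$ error. Adding the estimate from Lemma~\ref{Lem:r_1n^2} to the display above then yields
\begin{equation*}
f_{p+1}(G)\;\ge\;h(r)\,n^2-\gamma(r)\,n+O_p(1),\qquad r=|\dR|/n,
\end{equation*}
where $h(r):=\frac{p-2}{p-1}r-\frac{p(p-2)}{2(p-1)}r^2+\frac{\bar\tau(r)^2}{2(p-1)}$ and $\gamma(r):=\frac{pr}{2}+\frac{\bar\tau(r)}{2}$. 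It remains to show $h(r)n^2-\gamma(r)n\ge\alpha n^2-\beta n+O_p(1)$ uniformly for $r\in[1/n,r_0]$, where $\alpha=\frac{2(p-2)^2}{p(4p^2-11p+8)}$ and $\beta=\frac{(p-2)(2p-3)}{4p^2-11p+8}$ are the target constants of Theorem~\ref{Thm:thorough version}.

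The heart of the argument is a pair of algebraic identities. A direct calculation shows that $h$ is strictly convex with $h''=\frac{p(4p^2-11p+8)}{4(p-1)^3}$, and solving $h'(r)=0$ yields the unique critical point $r^*=\frac{2(p-2)^2}{p(4p^2-11p+8)}$; one then verifies, remarkably, that $h(r^*)=\alpha$ and $\gamma(r^*)=\beta$ on the nose. Writing $h(r)-\alpha=\frac{h''}{2}(r-r^*)^2$ and $\gamma(r)-\beta=\frac{p}{4(p-1)}(r-r^*)$, the deficit $[h(r)-\alpha]n^2-[\gamma(r)-\beta]n$ is bounded below by its unconstrained minimum $-\frac{p(p-1)}{8(4p^2-11p+8)}=O_p(1)$. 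This gives the desired inequality; the residual $O_p(\delta/(rn^2))\cdot n^2=O_p(\delta/r)$ error from \eqref{Equ:z{p-1}} is absorbed because for $r<r^*/2$ the quadratic slack $\frac{h''}{2}(r-r^*)^2n^2$ is of order $n^2$, while for $r\ge r^*/2$ the term $\delta/r$ is itself $O_p(1)$.

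The main obstacle is verifying the pair $h(r^*)=\alpha$, $\gamma(r^*)=\beta$: this is a nontrivial compatibility between the extremal bounds of Lemmas~\ref{Lem:e(R^*,H)} and \ref{Lem:r_1n^2} and the leading constants appearing in Theorem~\ref{Thm:thorough version}, and it is what makes the $A_i=\emptyset$ case exactly tight. Once these identities are in hand, convexity of $h$ closes the argument.
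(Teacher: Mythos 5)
Your proof follows essentially the same route as the paper's: the same Jensen bound $\ell_2^{\dR}\ge(p-1)\binom{z_{p-1}(R^*)n/(p-1)}{2}$ exploiting that one $A_i(R^*)$ is empty, the same substitution of the lower bound on $z_{p-1}(R^*)$ from Lemma~\ref{Lem:e(R^*,H)}, the same addition of the $\ell_1^{\dR}$ estimate from Lemma~\ref{Lem:r_1n^2}, and the same quadratic minimization in $r$ --- your identities $h(r^*)=\alpha$ and $\gamma(r^*)=\beta$ and the residue $-\frac{p(p-1)}{8(4p^2-11p+8)}$ all check out and coincide with the paper's Appendix C computation, and your split at $r=r^*/2$ for absorbing the $O_p(\delta/r)$ error plays the same role as the paper's split at $r=\frac{1}{40p(p-2)(2p-3)}$. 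Two small verifications are missing: first, substituting a lower bound for $z_{p-1}(R^*)$ into $z\mapsto\frac{z^2}{2(p-1)}n^2-\frac{z}{2}n$ is only legitimate where this parabola is increasing, so one must check that $z_{p-1}(R^*)$ and its lower bound exceed $\frac{p-1}{2n}$ (the paper verifies this using $r\le r_0$, $\delta\le\frac{p-1}{8}$ and $n\ge 120p^2$); second, the lemma's conclusion includes the exact, $O_p(1)$-free bound when $p(p-1)(4p^2-11p+8)$ divides $n$, for which one must note that $\delta=0$ and that over integer values of $rn$ the deficit $\frac{h''}{2}(r-r^*)^2n^2-\frac{p}{4(p-1)}(r-r^*)n$ is minimized at $rn=r^*n$ with value $0$. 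Both are routine within your setup.
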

	
	\begin{proof}
	Let $A_i=A_i(R^*)$ and $z_{p-1}=z_{p-1}(R^*)$.
	Without loss of generality, we assume that $A_p=\emptyset$.
	Recall that each pair of vertices in $A_i$ is a $K_{p+1}$-saturating edge in $H_{\dR}$ and by \eqref{Equ:sum A_i}, $\sum_{i=1}^{p-1}|A_i|/n=\sum_{i=1}^{p}|A_i|/n=z_{p-1}$.
	Using Jensen's inequality, we get that
	\begin{align*}
		\ell_2^{\dR}\ge \sum_{i=1}^{p-1}\binom{|A_i|}{2}\ge(p-1)\binom{\frac{z_{p-1}}{p-1}n}{2}=\frac{z_{p-1}^2}{2(p-1)}n^2-\frac{z_{p-1}}{2}n.
	\end{align*}
	Since $\dR$ satisfies the condition (\rmnum{1}), by Lemma~\ref{Lem:r_1n^2},
	we may assume that $r\le \frac{2(p-2)(2p-3)}{p(4p^2-11p+8)}$.
	By \eqref{Equ:range of r}, we have $r\geq 1/n$.
	Since $\delta\le \frac{p-1}{8}$, we can derive from \eqref{Equ:z{p-1}} that
	\begin{eqnarray*}
		z_{p-1} &\ge& \frac{p-2}{p-1}-\frac{p(2p-3)}{2(p-1)}r-\frac{\delta}{rn^2}    \nonumber     \\
		&\ge& \frac{p-2}{p-1}-\frac{p(2p-3)}{2(p-1)}\cdot\frac{2(p-2)(2p-3)}{p(4p^2-11p+8)}-\frac{p-1}{8n}\nonumber  \\
		&=&\frac{p-2}{4p^2-11p+8}-\frac{p-1}{8n} > \frac{p-1}{2n},  \nonumber
	\end{eqnarray*} where the last inequality holds as $n\ge 120p^2.$
	Note that $h(z_{p-1})=\frac{z_{p-1}^2}{2(p-1)}n^2-\frac{z_{p-1}}{2}n$ is increasing in the range of $z_{p-1}>\frac{p-1}{2n}$ and takes its minimum at the smallest value that $z_{p-1}$ can take.
	Thus
	\begin{eqnarray}\label{Equ:lower bound of l_2}
		\ell_2^{\dR}&\ge&h(z_{p-1})\geq \frac{\left(\frac{p-2}{p-1}-\frac{p(2p-3)}{2(p-1)}r-\frac{\delta}{rn^2}\right)^2}{2(p-1)}n^2-\frac{\left(\frac{p-2}{p-1}-\frac{p(2p-3)}{2(p-1)}r-\frac{\delta}{rn^2}\right)}{2}n   \nonumber\\
		&=&\frac{\left(2(p-2)-p(2p-3)r\right)^2}{8(p-1)^3}n^2-\frac{2(p-2)-p(2p-3)r}{4(p-1)}n+\delta\cdot F(n,p,r,\delta),
	\end{eqnarray}
	where
	\begin{eqnarray}\label{Equ:F(n,p,r,delta)}
		F(n,p,r,\delta)=\frac{\delta}{2(p-1)r^2n^2}-\frac{p-2}{(p-1)^2r}+\frac{p(2p-3)}{2(p-1)^2}+\frac{1}{2rn}\ge -\frac{p-2}{(p-1)^2r}.
	\end{eqnarray}
	Thus, using $r\geq 1/n$ and $\delta\le \frac{p-1}{8}$, we have
	\begin{eqnarray*}
		\ell_2^{\dR}&\ge&
		\frac{\left(2(p-2)-p(2p-3)r\right)^2}{8(p-1)^3}n^2-\frac{p-2}{2(p-1)}n-\frac{(p-2)\delta}{(p-1)^2r}\nonumber\\
		&\geq&
		\frac{\left(2(p-2)-p(2p-3)r\right)^2}{8(p-1)^3}n^2-\frac{p-2}{2(p-1)}n-\frac{p-2}{8(p-1)}n\nonumber\\
		&>&
		\frac{\left(2(p-2)-p(2p-3)r\right)^2}{8(p-1)^3}n^2-\frac{5}{8}n\nonumber.
	\end{eqnarray*}
	Next we claim that Theorem \ref{Thm:thorough version} holds in case $r\le \frac{1}{40p(p-2)(2p-3)}$. Indeed, by the above lower bound of $\ell_2^{\dR}$, we have
	\begin{align*}
		f_{p+1}(G)&\ge \ell_2^{\dR}>
		\frac{\left(2(p-2)-p(2p-3)r\right)^2}{8(p-1)^3}n^2-\frac{5}{8}n\nonumber\ge \frac{\left(2(p-2)-\frac{1}{40(p-2)}\right)^2}{8(p-1)^3}n^2-\frac{5}{8}n\\
		&> \frac{4(p-2)^2-0.1}{8(p-1)^3}n^2-\frac{5}{8}n\ge \frac{2(p-2)^2}{p(4p^2-11p+8)}n^2-\frac{(p-2)(2p-3)}{4p^2-11p+8}n
	\end{align*}
	where the last inequality holds whenever $n\ge 120p^2$ and $p\ge 3$ (see the verification in Appendix~B).
	This matches the lower bounds of Theorem~\ref{Thm:thorough version} and thus proves the above claim.
	
	Therefore in the following of the proof, we may assume that $\frac{1}{40p(p-2)(2p-3)}\le r\le \frac{2(p-2)(2p-3)}{p(4p^2-11p+8)}$.
	By~\eqref{Equ:lower bound of l_2} and \eqref{Equ:F(n,p,r,delta)}, we see that
	$F(n,p,r,\delta)\ge -\frac{p-2}{(p-1)^2r}\ge -\frac{40p(p-2)^2(2p-3)}{(p-1)^2}\ge -40(p-2)(2p-3),$
	and thus
	\begin{eqnarray*}
		\ell_2^{\dR}\ge\frac{\left(2(p-2)-p(2p-3)r\right)^2}{8(p-1)^3}n^2-\frac{2(p-2)-p(2p-3)r}{4(p-1)}n-40\delta(p-2)(2p-3).
	\end{eqnarray*}
	This together with the estimation on $\ell_1^{\dR}$ from Lemma~\ref{Lem:r_1n^2} give that
	\begin{align*}
		f_{p+1}(G)=&\ell_1^{\dR}+\ell_2^{\dR}   \\
		\ge&  \left(\frac{p-2}{p-1}r-\frac{p(p-2)}{2(p-1)}r^2\right)n^2-\frac{pr}{2}n-\delta+ \frac{(2(p-2)-p(2p-3)r)^2}{8(p-1)^3}n^2\\
		&-\frac{2(p-2)-p(2p-3)r}{4(p-1)}n-40\delta(p-2)(2p-3)  \\
		=&\frac{p(4p^2-11p+8)n^2}{8(p-1)^3}r^2-\frac{2(p-2)^2n^2+p(p-1)^2n}{4(p-1)^3}r+\frac{(p-2)^2}{2(p-1)^3}n^2-\frac{p-2}{2(p-1)}n\\
		&-\delta\left(40(p-2)(2p-3)+1\right)\\	
		\ge&\frac{2(p-2)^2}{p(4p^2-11p+8)}n^2-\frac{(p-2)(2p-3)}{4p^2-11p+8}n-\frac{p(p-1)}{8(4p^2-11p+8)}-\delta\left(40(p-2)(2p-3)+1\right)\\
		=& \frac{2(p-2)^2}{p(4p^2-11p+8)}n^2-\frac{(p-2)(2p-3)}{4p^2-11p+8}n+O_p(1),
	\end{align*}
	where the last inequality holds since the quadratic function on $r$ formed by the first two terms
	on one side
	is minimized at $r=\frac{2(p-2)^2}{p(4p^2-11p+8)}+\frac{(p-1)^2}{(4p^2-11p+8)n}$ (for the detailed calculation, see Appendix C).
	This matches the lower bounds of Theorem~\ref{Thm:thorough version}.
	For the case when $n$ is divisible by $p(p-1)(4p^2-11p+8)$,
	we have $\delta=0$ and in this case, the optimal $r$ for the last inequality should be chosen as $r=\frac{2(p-2)^2}{p(4p^2-11p+8)}$ so that $rn$ is an integer\footnote{Note that this value of $r$ corresponds to the exact construction in Section~2}; repeating the above calculation, it would exactly imply that
	$f_{p+1}(G)\ge\frac{2(p-2)^2}{p(4p^2-11p+8)}n^2-\frac{(p-2)(2p-3)}{4p^2-11p+8}n$.
	Now Lemma \ref{Lem:<p-joint-book} is completed.
\end{proof}
	
	\medskip
	
	Finally we are ready to finish the proof of Theorem \ref{Thm:thorough version}.
	By Lemma~\ref{Lem:<p-joint-book}, for any $\dR$ satisfying the condition (\rmnum{1}) and for any $R_0\in \dR$ satisfying \eqref{Equ:e(R^*,H)}, we may assume that $A_i(R_0)\neq \emptyset$ for each $i\in [p]$, i.e.,
	any $p-1$ vertices in $V(R_0)$ have at least one common neighbor in $H_{\dR}=G\backslash V(\dR)$.

	Let $R^*\in \dR$ be the $p$-clique obtained from Lemma~\ref{Lem:e(R^*,H)}.
	So $R^*$ satisfies \eqref{Equ:e(R^*,H)}.
	Let	$C$ be a clique in $H_{\dR}$ of maximum size such that
	$R^*\cup C$ contains a $p$-clique $R'$ in $G$ covering all the vertices of $C$.
	Since $A_i(R^*)\neq \emptyset$ for each $i\in [p]$, such a clique $C$ exists in $H_{\dR}$ (for instance, one can just take one vertex in $A_1(R^*)$).
	Let $V(R^*)=\{v_1,...,v_p\}$ and $V(C)=\{x_1,...,x_c\}$ for some integer $c\ge 1$.
	Without loss of generality we may assume that $$V(R')=\{x_1,...,x_c,v_{c+1},...,v_p\}.$$
	In what follows, we should complete the proof by deriving the final contradiction that $c\ge p$.

Suppose that $c\le p-1$.
In this case, we are always able to find a clique in $H_{\dR}$ of larger size than $C$ and satisfying the above conditions required for $C$.
To see this, let $\dR'=\left(\dR\backslash\{R^*\}\right)\cup\{R'\}$ and $H_{\dR'}=G\backslash V(\dR').$
So $\dR'$ also satisfies the condition (\rmnum{1}) and
$$V(H_{\dR'})=(V(H_{\dR})\backslash \{x_1,\ldots, x_c\})\cup \{v_1,\ldots, v_c\}.$$
Applying Lemma~\ref{Lem:Key lemma for p-joint-book} with the clique $R$ therein being $R^*$,
we know that
$$e(R',H_{\dR'})\ge e(R^*,H_{\dR})\ge \left(\frac{p(p-2)}{p-1}-\frac{p(2p^2-4p+1)}{2(p-1)}r\right)n-\frac{\delta}{rn},$$
where the last inequality holds as $R^*$ satisfies \eqref{Equ:e(R^*,H)}.
That says, $R'\in \dR'$ also satisfies \eqref{Equ:e(R^*,H)}.
As discussed earlier, by Lemma~\ref{Lem:<p-joint-book}, any $p-1$ vertices in $V(R')$ have at least one common neighbor in $H_{\dR'}$.
In particular, there exists a vertex $y\in V(H_{\dR'})$ such that it is not adjacent to $v_p$ but is adjacent to all other vertices of $V(R')$.
Obviously, $y\notin \{v_1,\ldots, v_c\}$, since $v_iv_p\in E(G)$ for each $i\in [c]$.
So it must be the case that $y\in V(H_{\dR})\backslash \{x_1,\ldots, x_c\}.$
Now let $C'=\{x_1,\ldots, x_c, y\} \subseteq V(H_{\dR})$.
Then $C'$ is a clique in $H_{\dR}$ of size larger than $C$ such that $C'\cup\{v_{c+1},\ldots, v_{p-1}\}$ is a $p$-clique contained in $R^*\cup C'$ and covering all vertices of $C'$.
This is a contradiction to our choice of $C$.
Therefore, we must have that $c\ge p$.
However, it is also a contradiction to the fact that $H_{\dR}$ is $K_p$-free,
proving Theorem \ref{Thm:thorough version}.
\QED
	
\section{Concluding remarks}	
In this paper, we determine the order of $f_{p+1}\big(n,\ex(n, K_p)+1\big)$, confirming a conjecture of Balogh and Liu~\cite{BL}.
Balogh and Liu proved a stronger result in \cite{BL} that $f_4(n,\lfloor \frac{n^2}4 \rfloor+t)=\frac{2}{33}n^2+\Theta(n)$ holds for every positive integer $t$ up to $\frac{n}{66}.$
We remark that the upper bound construction of Theorem \ref{Thm:main} as well as the proof of Section 3 also show that
$$f_{p+1}\big(n,\ex(n, K_p)+t\big)= \frac{2(p-2)^2}{p(4p^2-11p+8)}n^2+\Theta(n)$$
holds for any integer $1\le t\le \frac{(p-2)^3}{p(p-1)(4p^2-11p+8)}n$.
It is interesting to determine the function of $f_{p+1}(n, m)$ for every integer $m$ between $\ex(n,K_p)$ and $\ex(n,K_{p+1})$.
We would like to ask if for all $m$, the extremal $K_{p+1}$-free graph attaining this minimum number is always obtained from an appropriate blow-up\footnote{Here, we may allow that some vertices are blowing up to empty sets. For instance, a blow-up of $K_p$ counts.} of the same graph suggested in \cite{BL} (i.e., the graph obtained by taking a complete $(p-1)$-partite graph $K=K_{2,...,2}$ and adding a new vertex by making it adjacent to exactly one vertex in each partite set of $K$) by deleting $O(n)$ edges.

Let $H$ be a given graph. For an $H$-free graph $G$, a non-edge of $G$ is called an {\it $H$-saturating edge}, if $G+e$ contains a copy of $H$.
Let $f_H(G)$ denote the number of $H$-saturating edges of $G$ and let $f_H(n,m)$ denote the minimum of $f_H(G)$ over all $H$-free $n$-vertex graphs $G$ with $m$ edges.
It is natural to consider the same minimization problem $f_H(n,m)$ for general $H$.
The following family of graphs seems to be of particular interest.
A pair of two edges $e, f$ in $H$ is called {\it critical} if $\chi(H-\{e,f\})=\chi(H)-2$.
It is clear that such two edges $e, f$ must be vertex-disjoint.
We say a graph $H$ is {\it double-edge-critical} if it contains a critical pair of two edges $e,f$.
We point out that there are many double-edge-critical graphs, for example,
any join obtained from a $p$-clique for $p\geq 4$ and an arbitrary graph is double-edge-critical.
From the definition, we see that such $H$ is also {\it edge-critical},\footnote{A graph $F$ is edge-critical if there exists an edge $e^*$ such that $\chi(F-e^*)=\chi(F)-1$. A classical result of Erd\H{o}s and Simonovits states that for sufficiently large $n$,
if $F$ is an edge-critical graph, then the unique $n$-vertex $F$-free extremal graph for $\ex(n,F)$ is $T_{\chi(F)-1}(n)$.}
so is each of $H-e$ and $H-f$.
Let $\chi(H)=p+1$.
Then it follows that $f_H(n,m)=0$ for all integers $m\leq \ex(n,K_p)$.  	
We believe that the same phenomenon as Theorem~\ref{Thm:main} holds for any double-edge-critical graph $H$, that is, $f_H\big(n,\ex(n,K_p)+1\big)$ would suddenly jump to $\Omega(n^2)$.
We wonder if $f_H\big(n,\ex(n,K_p)+1\big)$ can be determined for every double-edge-critical graph $H$ with $\chi(H)\geq 4$.
	
\bigskip

\noindent{\bf Acknowledgements.}
The authors would like to thank Jozsef Balogh for suggesting the problem of \cite{BL} to the fourth author in 2020 Summer. The fourth author additionally thanks him for fruitful discussions.

	\bibliographystyle{unsrt}

\appendices

\section{Calculations for the upper bound of Theorem~\ref{Thm:thorough version}}

We first calculate $|V(H_0)|$ and $e(H_0)$ (see Figure 1).
It is easy to see that
$|V(H_0)|=2(p-1)(p-2)^2x+4(p-1)^3(p-2)x+p(p-1)(3p-4)x=p(p-1)(4p^2-11p+8)x,$ and
\begin{align*}
	e(H_0)=&8(p-1)^4(p-2)^3x^2+\frac{p-2}{2(p-1)}\cdot (p-1)^2 \left(4(p-1)^2(p-2)+p(3p-4)\right)^2x^2\\
          =&\frac{p-2}{2(p-1)}\cdot p^2(p-1)^2(4p^2-11p+8)^2x^2.
\end{align*}
By our definition of $H_1,$ we get that $|V(H_1)|=|V(H_0)|+y=p(p-1)(4p^2-11p+8)x+y=n,$ and
\begin{align*}
	e(H_1)=&e(H_0)+2y\cdot 4(p-1)^3(p-2)x-y\cdot 4(p-1)^2(p-2)^2x-y \cdot p(p-2)(3p-4)x+t_{p-1}(y)\\
	=&e(H_0)+p(p-2)(4p^2-11p+8)xy+t_{p-1}(y)\\
	=&\frac{p-2}{2(p-1)}\cdot p^2(p-1)^2(4p^2-11p+8)^2x^2+p(p-2)(4p^2-11p+8)xy+t_{p-1}(y)=\ex(n,K_p).
\end{align*}
Since the only $K_{p+1}$-saturating edges are the pairs in $V_i$ for $0\le i\le p-1,$
we get that
\begin{align*}
	f_{p+1}(H_1)=&\binom{2(p-1)(p-2)^2x+2y}{2}+(p-1)\binom{4(p-1)^2(p-2)x}{2}\\
	=&2p(p-1)^2(p-2)^2(4p^2-11p+8)x^2+4(p-1)(p-2)^2xy\\
	&-p(p-1)(p-2)(2p-3)x+2y^2-y\\
	=&\frac{2(p-2)^2}{p(4p^2-11p+8)}\left(p(p-1)(4p^2-11p+8)x+y\right)^2\\
	&-\frac{(p-2)(2p-3)}{4p^2-11p+8} \left(p(p-1)(4p^2-11p+8)x+y\right)\\
	&+2y^2-\frac{2(p-2)^2}{p(4p^2-11p+8)}y^2+\frac{(p-2)(2p-3)}{4p^2-11p+8}y-y\\
	=&\frac{2(p-2)^2}{p(4p^2-11p+8)}n^2-\frac{(p-2)(2p-3)}{4p^2-11p+8}n+\frac{8(p-1)^3}{p(4p^2-11p+8)}y^2-\frac{2(p-1)^2}{4p^2-11p+8}y,
\end{align*}
as desired. \QED

\section{Verifying an inequality in the proof of Lemma \ref{Lem:<p-joint-book}}
We want to verify that the following inequality appeared as the last inequality in the second last paragraph of the proof of Lemma \ref{Lem:<p-joint-book} (see page 10) holds:
\begin{align*}
	\frac{4(p-2)^2-0.1}{8(p-1)^3}n^2-\frac{5}{8}n\ge \frac{2(p-2)^2}{p(4p^2-11p+8)}n^2-\frac{(p-2)(2p-3)}{4p^2-11p+8}n
\end{align*}
for any $n\ge 120p^2$ and $p\ge 3$.
First observe that the above inequality is equivalent to
$$\left(\frac{4(p-2)^2-0.1}{8(p-1)^3}-\frac{2(p-2)^2}{p(4p^2-11p+8)}\right)n\ge \frac{5}{8}-\frac{(p-2)(2p-3)}{4p^2-11p+8},$$
which can be further simplified to
$$\frac{p(4p^2-16p+15.9)(4p^2-11p+8)-16(p-1)^3(p-2)^2}{8p(p-1)^3(4p^2-11p+8)}n\ge\frac{4p^2+p-8}{8(4p^2-11p+8)}.$$
Let $f(p)=p(4p^2-16p+15.9)(4p^2-11p+8)-16(p-1)^3(p-2)^2=4p^4-32.4p^3+97.1p^2-128.8p+64.$
Then we only need to show that
$nf(p)\ge p(p-1)^3(4p^2+p-8).$
First from the calculation by python (see Figure A), we can see that $f(p)\ge 0$ for all $p\ge 3.$

\begin{figure}
	\includegraphics[height=8cm,width=8cm]{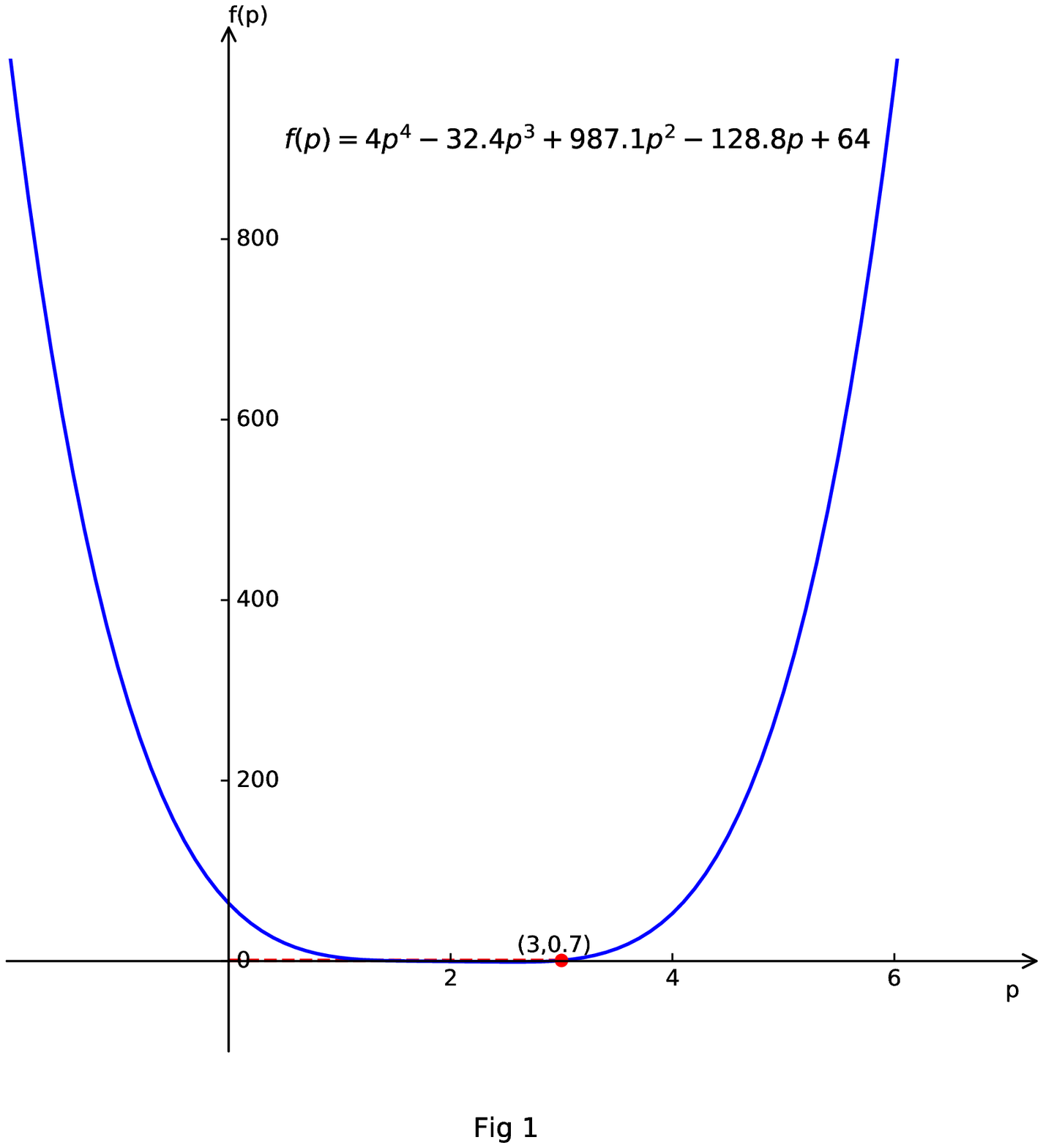}
	\quad\quad
	\includegraphics[height=8cm,width=8cm]{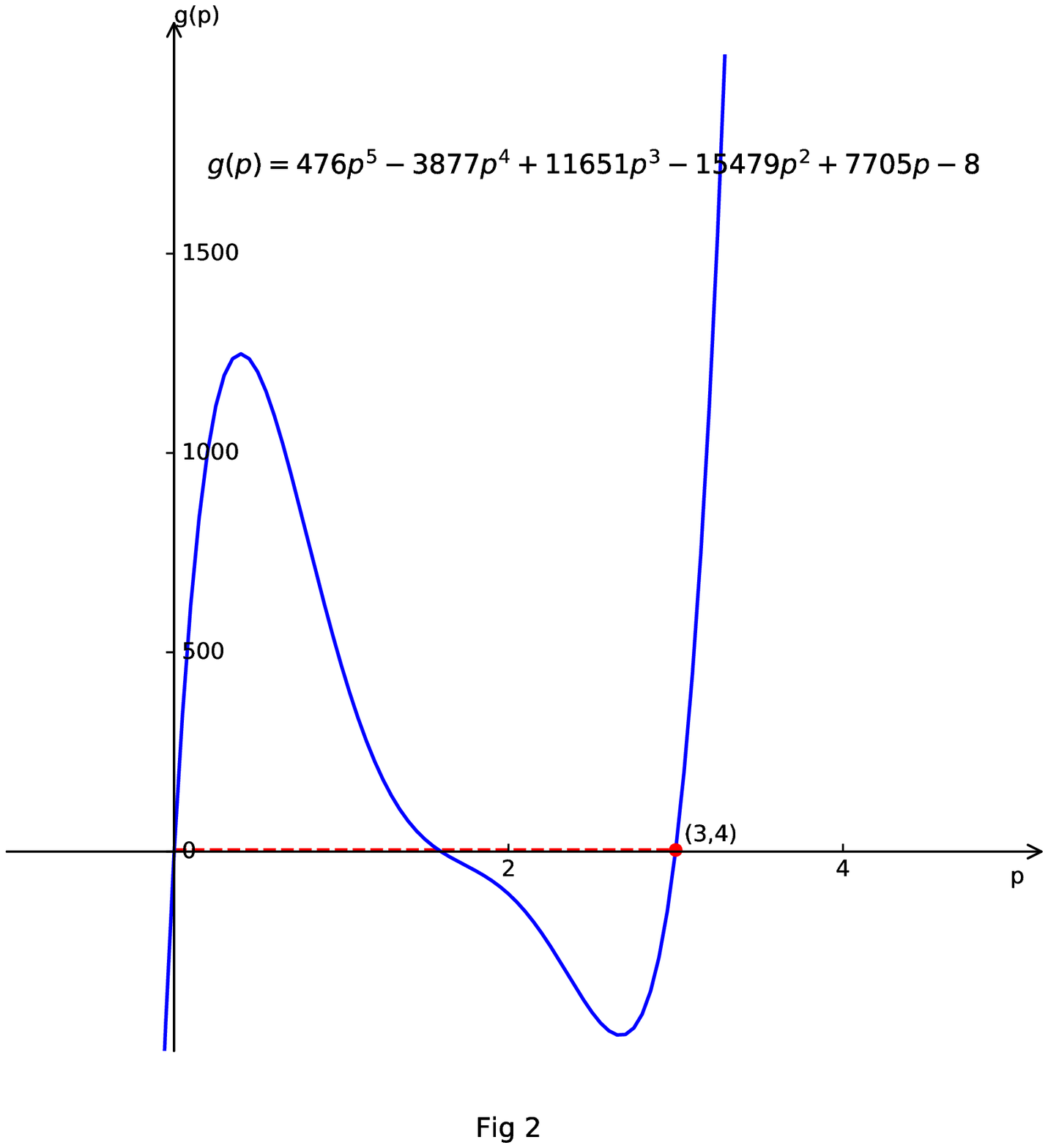}
\end{figure}

Since $f(p)\ge0,$ as $n\ge 120p^2$, we get that $nf(p)\ge 120p^2f(p).$
Thus we only need to show that
$120pf(p)-(p-1)^3(4p^2+p-8)\ge 0.$
Let $g(p)=120pf(p)-(p-1)^3(4p^2+p-8)=476p^5-3877p^4+11651p^3-15479p^2+7705p-8.$
Then, from the calculation by python (see Figure B), we can see that indeed, $g(p)\ge 0$ for all $p\ge 3$. This proves the desired inequality.
\QED

\section{On an immediate step in the proof of Lemma \ref{Lem:<p-joint-book}}
Here we want to show that the last inequality in the last paragraph of the proof of Lemma \ref{Lem:<p-joint-book} (see page 11) holds.
This is equivalent to show that
\begin{align*}
	h(n,p,r)\ge\frac{2(p-2)^2}{p(4p^2-11p+8)}n^2-\frac{(p-2)(2p-3)}{4p^2-11p+8}n-\frac{p(p-1)}{8(4p^2-11p+8)},
\end{align*}
where $h(n,p,r)=\frac{p(4p^2-11p+8)n^2}{8(p-1)^3}r^2-\frac{2(p-2)^2n^2+p(p-1)^2n}{4(p-1)^3}r+\frac{(p-2)^2}{2(p-1)^3}n^2-\frac{p-2}{2(p-1)}n.$
Reformulating the above inequality by using factorization, we let
\begin{align*}
	H(r)=&8p(p-1)^3(4p^2-11p+8)\cdot h(n,p,r)\\
	=&p^2(4p^2-11p+8)^2n^2\cdot r^2-\left(4p(p-2)^2(4p^2-11p+8)n^2+2p^2(p-1)^2(4p^2-11p+8)n\right)\cdot r\\
	&+4p(p-2)^2(4p^2-11p+8)n^2-4p(p-1)^2(p-2)(4p^2-11p+8)n.
\end{align*}
Then, it becomes to show that
$H(r)\ge 16(p-1)^3(p-2)^2n^2-8p(p-1)^3(p-2)(2p-3)n-p^2(p-1)^4.$
Since $H''(r)>0$, $H(r)$ is a convex quadratic function on $r$ and
minimized at $r=\frac{2(p-2)^2}{p(4p^2-11p+8)}+\frac{(p-1)^2}{(4p^2-11p+8)n}$ (i.e., the solution of the equation $H'(r)=0$).
Thus, we have
\begin{align*}
	H(r)&\ge H\left(\frac{2(p-2)^2}{p(4p^2-11p+8)}+\frac{(p-1)^2}{(4p^2-11p+8)n}\right)\\
	&=p^2(4p^2-11p+8)^2n^2\cdot\left(\frac{4(p-2)^4}{p^2(4p^2-11p+8)^2}+\frac{4(p-1)^2(p-2)^2}{p(4p^2-11p+8)^2n}+\frac{(p-1)^4}{(4p^2-11p+8)^2n^2}\right)\\
	&-\left(4p(p-2)^2(4p^2-11p+8)n^2+2p^2(p-1)^2(4p^2-11p+8)n\right)\cdot \left(\frac{2(p-2)^2}{p(4p^2-11p+8)}+\frac{(p-1)^2}{(4p^2-11p+8)n}\right)\\
	&+4p(p-2)^2(4p^2-11p+8)n^2-4p(p-1)^2(p-2)(4p^2-11p+8)n\\
	&=\left(4(p-2)^4-8(p-2)^4+4p(p-2)^2(4p^2-11p+8)\right)\cdot n^2+\big(4p(p-1)^2(p-2)^2-4p(p-1)^2(p-2)^2-\\
	&-4p(p-1)^2(p-2)^2-4p(p-1)^2(p-2)(4p^2-11p+8)\big)\cdot n+\left(p^2(p-1)^4-2p^2(p-1)^4\right)\\
	&=16(p-1)^3(p-2)^2n^2-8p(p-1)^3(p-2)(2p-3)n-p^2(p-1)^4,
\end{align*}
as we wanted. This verifies the inequality under consideration.
\QED

\bigskip

\bigskip

{\it E-mail address:} hjxhjl@mail.ustc.edu.cn

\medskip

{\it E-mail address:} fma@ustc.edu.cn

\medskip

{\it E-mail address:} jiema@ustc.edu.cn

\medskip

{\it E-mail address:} xinyang@stu.pku.edu.cn
	
\end{document}